\documentclass{article}
\usepackage{amsmath,amssymb,amsthm,ascmac,latexsym,mathrsfs,subfigure}
\usepackage[dvips]{graphicx}

\setlength{\textwidth}{15cm}
\setlength{\textheight}{23cm}
\setlength{\oddsidemargin}{0.5cm}
\setlength{\topmargin}{-1cm}

%\DeclareMathSymbol{\Real}{\mathalpha}{AMSb}{"52}
\DeclareMathOperator{\N}{\mathbb{N}}
\DeclareMathOperator{\Z}{\mathbb{Z}}

\DeclareMathOperator{\R}{\mathbb{R}}

\providecommand{\abs}[1]{\lvert#1\rvert}
\providecommand{\Abs}[1]{\left\lvert#1\right\rvert}

\providecommand{\norm}[1]{\lVert#1\rVert}
\providecommand{\Norm}[1]{\left\lVert#1\right\rVert}

\newcommand{\mua}{\mu_\text{a}}
\newcommand{\mus}{\mu_\text{s}}

\setcounter{section}{0}
%\numberwithin{equation}{section}
\theoremstyle{definition}
\newtheorem{dfn}{Definition}[section]
\newtheorem{thm}[dfn]{Theorem}
\newtheorem{lemma}[dfn]{Lemma}

\newtheorem{remark}[dfn]{Remark}
\newtheorem{example}[dfn]{Example}

\title{Stability and Convergence of an Upwind Finite Difference Scheme for the Radiative Transport Equation}

\author{Nobuyuki Higashimori$^{1}$ and Hiroshi Fujiwara$^{2}$ \\
($^{1}$ Graduate School of Economics, Hitotsubashi University) \\
($^{2}$ Graduate School of Informatics, Kyoto University)}

\date{}

\begin{document}

\maketitle

\begin{abstract}
An explicit numerical scheme is proposed for solving the 
initial-boundary value problem for the radiative transport equation in a 
rectangular domain with completely absorbing boundary condition. An 
upwind finite difference approximation is applied to the differential 
terms of the equation, and the composite trapezoidal rule to the 
scattering integral. The main results are positivity, stability, and 
convergence of the scheme. It is also shown that the scheme can be 
regarded as an iterative method for finding numerical solutions to the 
stationary transport equation. Some numerical examples for the 
two-dimensional problems are given. 
\end{abstract}

\section{Introduction}
\label{sect:1}

The radiative transport equation (RTE) is an integro-differential 
equation which is widely used as a mathematical model of the flow of 
particles traveling through a medium which absorbs and scatters the 
particles \cite{Dautray and Lions}. For example, let us regard the propagation of light in a 
turbid medium as a flow of photons and let $I=I(t,x,\xi)$ denote the 
density of the photons moving in the direction $\xi$ at time $t$ and 
position $x$. With suitable assumptions on the optical property of the 
medium, we can obtain an equation of the form \eqref{2.1a} (given 
below) which governs the time evolution of $I$. 

Let $\Omega$ be a rectangular domain with edges parallel to the coordinate axes 
in the $d$-dimensional Euclidean space $\R^d$ ($d=2$ or $3$) and let $S^{d-1}$ denote 
the unit sphere in $\R^d$. We consider the initial-boundary value problem 
\begin{subequations}
\label{2.1}
\begin{align}
\dfrac{1}{c(x)}\dfrac{\partial I}{\partial t}=-\xi\cdot\nabla_x I-\{\mua(x)+\mus(x)\}I
&+\mus(x)\int_{S^{d-1}}p(x;\xi,\xi')I(t,x,\xi')\,d\omega_{\xi'}+q(t,x,\xi),\,\ \nonumber \\
&\hspace{3cm}\text{$0<t<T,\ (x,\xi)\in \Omega\times S^{d-1}$},\label{2.1a} \\
I(0,x,\xi)&=I_0(x,\xi),\hspace{1.3cm}\text{$(x,\xi)\in \Omega\times S^{d-1}$},\label{2.1b} \\
I(t,x,\xi)&=I_1(t,x,\xi),\hspace{1cm}\text{$0<t<T,\ (x,\xi)\in\Gamma_{-}$},\label{2.1c}
\end{align}
\end{subequations}
where 
\[
\Gamma_{-}:=\left\{(x,\xi)\in \partial\Omega\times S^{d-1};\ n(x)\cdot\xi<0\right\},
\]
with $n(x)$ the outward unit normal vector at $x\in\partial\Omega$. In 
equation \eqref{2.1a} the dot $\cdot$ indicates the standard inner 
product in $\R^d$, $\nabla_x$ denotes the spatial gradient and 
$d\omega_{\xi'}$ the surface element on $S^{d-1}$. 
The coefficient $c(x)$ denotes the speed of particles at position $x$, $\mua(x)$ the absorption 
coefficient, and $\mus(x)$ the scattering coefficient. The integral 
kernel $p(x;\xi,\xi')$ is called the scattering phase function and 
expresses the density of the conditional probability of the event that a 
particle moving in the direction $\xi'$ is scattered into the direction 
$\xi$ at position $x$. The inhomogeneous term $q(t,x,\xi)$ is the 
density of internal source of particles emitted in the direction $\xi\in 
S^{d-1}$ at time $t$ and position $x$. 

An application of the RTE of the form \eqref{2.1a} can be found in 
researches on diffuse optical tomography (DOT), which is expected to be 
a new modality of noninvasive medical imaging which uses low-energy light \cite{Arridge,ArridgeSchotland}. 
The propagation of light in the biological tissue under examination is 
modeled by equation \eqref{2.1a} whose coefficients are 
unknown quantities to be determined by DOT. The coefficients represent optical 
properties of the biological tissue, and they actually depend not only on position $x$ 
but also on time $t$. However, the speed of light is so faster than time scale of 
tissue activity that we neglect their dependence on $t$. The procedure 
of imaging by DOT is regarded as an inverse problem of recovering the unknown 
coefficients by observing the solution $I$ at the 
boundary of the domain. When we analyze the inverse problem, it is extremely valuable to 
have an efficient numerical method for solving the problem \eqref{2.1} 
since it is practically impossible to have a simple analytic expression 
for the solution of \eqref{2.1}. 

The purpose of the present paper is to construct an 
explicit numerical scheme for solving the initial-boundary value problem 
\eqref{2.1}. In our discretization of \eqref{2.1a}, the 
differential operator $c^{-1}\partial_t+\xi\cdot\nabla_x$ is 
approximated by an upwind finite difference, and the scattering integral 
by the composite trapezoidal rule. 
We describe the detail only for the two-dimensional case in \S \ref{sect:2}, 
and give a brief account of the three-dimensional case in \S \ref{sect:3D}. 
The main results are the stability 
(Theorem \ref{thm:3.1}, Theorem \ref{thm:maximum}) 
and the convergence (Theorem \ref{thm:3.3}, Theorem \ref{thm:conv}) of the scheme. 
In \S \ref{sect:stat} we also show that the scheme can be regarded as 
an iterative method for finding numerical solutions to the stationary 
transport equation. 
In \S \ref{sect:num} we give examples of numerical solution of the initial-boundary value 
problem \eqref{2.1} and the corresponding stationary problem in the two-dimensional case. 

Finally we formulate our assumptions on the data of the problem 
\eqref{2.1} which are fixed throughout the paper. 
We assume that the coefficients $c$, $\mua$, and $\mus$ are measurable functions satisfying 
\begin{gather*}
0<c(x),\quad 
0\leq\mua(x),\quad 
0\leq\mus(x),\quad 
\mus(x)\not\equiv 0, \\
c^+:=\sup_{x\in\Omega}c(x)<+\infty,\qquad 
\mu^*:=\inf_{\{x;\ \mus(x)\neq 0\}}\frac{\mua(x)}{\mus(x)}>0. 
\end{gather*}
By its physical interpretation, the scattering kernel $p(x;\xi,\xi')$ must be nonnegative and satisfy 
\begin{equation}\label{normalize}
\int_{S^{d-1}}p(x;\xi,\xi')\,d\omega_{\xi}=1,\quad x\in\Omega,\ \xi'\in S^{d-1}.
\end{equation}
We assume moreover that $p$ depends on $\xi$ and $\xi'$ through the angle, say 
$\alpha$, formed by them, so that we can write 
\begin{equation}
\label{pp}
p(x;\xi,\xi')=\tilde{p}(x;\alpha), 
\end{equation}
with a nonnegative function $\tilde{p}(x;\alpha)$ 
which is $2\pi$-periodic and even in $\alpha$ for each $x$; 
we also call $\tilde{p}(x;\alpha)$ the phase function. 
Consequently, the condition \eqref{normalize} is equivalent to 
\[
\int_{S^{d-1}}p(x;\xi,\xi')\,d\omega_{\xi'}=1,\quad x\in\Omega,\ \xi\in S^{d-1},
\]
and is further rewritten in terms of $\tilde{p}$ as 
\begin{align*}
\int_{0}^{2\pi}\tilde{p}(x;\alpha)\,d\alpha=1\quad\text{for $d=2$}; \qquad 
%\label{2Dtildep} \\
2\pi\int_{0}^{\pi}\tilde{p}(x;\alpha)\sin\alpha\,d\alpha=1\quad\text{for $d=3$}. 
%\label{3Dtildep}
\end{align*}
We assume 
that $\tilde{p}(x;\alpha)$ is of class $C^2$ in $\alpha$ for each $x\in{\Omega}$ and that 
\[
\Norm{\frac{\partial^2\tilde{p}}{\partial \alpha^2}}_\infty
:=\sup_{(x,\alpha)\in\Omega\times[0,2\pi]}\,
\Abs{\frac{\partial^2\tilde{p}}{\partial \alpha^2}(x;\alpha)}<\infty.
\]
The inputs are assumed to be real bounded measurable functions: 
\[
q(t,x,\xi)\in L^{\infty}\bigl((0,T)\times\Omega\times S^{d-1}\bigr),\quad 
I_0(x,\xi)\in L^\infty(\Omega\times S^{d-1}),\quad 
I_1(t,x,\xi)\in L^\infty\bigl((0,T)\times\Gamma_{-}\bigr). 
\]
Their $L^\infty$-norms are denoted by $\norm{q}_\infty$, $\norm{I_0}_\infty$, and $\norm{I_1}_\infty$. 

\section{Upwind finite difference scheme for RTE and its properties for the two-dimensional case}
\label{sect:2}

Let $\Omega=(0,L_1)\times(0,L_2)$ be a rectangle in $\R^2$. 
In this section and the next we put $X=\Omega\times S^1$, which is the phase space of 
equation \eqref{2.1a}. 
For three positive integers $M_1$, $M_2$, and $M$ and a positive number $\Delta t$, 
we put 
\[
\Delta x_1=L_1/M_1,\quad \Delta x_2=L_2/M_2,\quad \Delta\theta=2\pi/M, 
\]
and consider the grid points $(t_k,x_{ij},\xi_n)$ defined by 
\begin{align*}
t_k=k\Delta t,\quad x_{ij}=(i\Delta x_1,j\Delta x_2),\quad 
\xi_n=(\xi_{n,1},\,\xi_{n,2})=(\cos n\Delta\theta,\sin n\Delta\theta),
\end{align*}
where $k,i,j,n\in\Z$. 
Using $I^k_{i,j,n}$ as the value corresponding to $I(t_k,x_{ij},\xi_n)$, 
we discretize the problem \eqref{2.1} as follows: 
\begin{subequations}\label{2.2}
\begin{align}
\dfrac{1}{c(x_{ij})}\dfrac{I^{k+1}_{i,j,n}-I^k_{i,j,n}}{\Delta t}
&=A_\Delta I^k_{i,j,n} - \Sigma_\Delta I^{k+1}_{i,j,n} + K_\Delta I^k_{i,j,n}+q^k_{i,j,n},\nonumber \\
&\hspace{3.4cm}\ 0\leq k\leq T/\Delta t-1,\ (x_{ij},\xi_n)\in X,\label{2.2a} \\
I^0_{i,j,n}&=I_0(x_{ij},\xi_n),\hspace{1.2cm}\ (x_{ij},\xi_n)\in X,\label{2.2b} \\
I^k_{i,j,n}&=I_1(t_k,x_{ij},\xi_n),\hspace{0.8cm}\ 0\leq k\leq T/\Delta t,\,\ (x_{ij},\xi_n)\in\Gamma_{-},
\label{2.2c}
\end{align}
\end{subequations}
where $q^k_{i,j,n}=q(t_k,x_{ij},\xi_n)$, and the operators $A_\Delta$, 
$\Sigma_\Delta$, and $K_\Delta$ are defined by 
\begin{subequations}%\label{FDO}
\begin{align}
A_\Delta I^k_{i,j,n}
&=-\xi_{n,1}\dfrac{I^k_{i+1,j,n}-I^k_{i-1,j,n}}{2\Delta x_1}
+\abs{\xi_{n,1}}\dfrac{I^k_{i+1,j,n}-2I^k_{i,j,n}+I^k_{i-1,j,n}}{2\Delta x_1}\nonumber \\
&\quad -\xi_{n,2}\dfrac{I^k_{i,j+1,n}-I^k_{i,j-1,n}}{2\Delta x_2}
+\abs{\xi_{n,2}}\dfrac{I^k_{i,j+1,n}-2I^k_{i,j,n}+I^k_{i,j-1,n}}{2\Delta x_2},\label{a-delta} \\
\Sigma_\Delta I^k_{i,j,n}&=\{\mus(x_{ij})+\mua(x_{ij})\}I^k_{i,j,n},\label{s-delta} \\
K_\Delta I^k_{i,j,n}&=\mus(x_{ij})\Delta\theta\sum_{\nu=0}^{M-1}p(x_{ij};\xi_n,\xi_\nu)I^k_{i,j,\nu}.\label{k-delta}
\end{align}
\end{subequations}

\begin{remark}%\label{rem:A_Delta}
(i) The operator $A_\Delta$ is defined so that the combination 
\begin{equation}
\label{upwind}
\dfrac{1}{c(x_{ij})}\dfrac{I^{k+1}_{i,j,n}-I^k_{i,j,n}}{\Delta t}-A_\Delta I^k_{i,j,n}
\end{equation}
gives the first-order upwind finite difference approximation to the partial differential operator 
${c}^{-1}\partial_t+\xi\cdot\nabla_x$. 
For example, %for $n$ with $n\Delta\theta\in[0,\pi/2)$ (i.e., $\xi_{n,1}>0,\,\xi_{n,2}\geq 0$), 
for $n$ with $\xi_{n,1}>0$ and $\xi_{n,2}\geq 0$, 
\eqref{upwind} is equal to 
\[
\dfrac{1}{c(x_{ij})}\dfrac{I^{k+1}_{i,j,n}-I^k_{i,j,n}}{\Delta t}
+\xi_{n,1}\dfrac{I^k_{i,j,n}-I^k_{i-1,j,n}}{\Delta x_1}
+\xi_{n,2}\dfrac{I^k_{i,j,n}-I^k_{i,j-1,n}}{\Delta x_2}. 
\]
The other three cases can be checked similarly. 

(ii) Scheme \eqref{2.2} is explicit; see \eqref{re-2.2a}. 
\end{remark}

\begin{thm}
\label{thm:3.1}
{\bf (Stability and positivity for $\boldsymbol{d=2}$)} 
Suppose that the discretization parameters satisfy 
\begin{subequations}
\label{3.1}
\begin{gather}
\dfrac{c^+\Delta t}{\Delta x_1}+\dfrac{c^+\Delta t}{\Delta x_2}\leq 1,\label{3.1a}\\
\Norm{\frac{\partial^2\tilde{p}}{\partial\alpha^2}}_\infty
\Delta\theta^2\leq\dfrac{12}{\pi}\mu^*.\label{3.1b}
\end{gather}
\end{subequations}
Then we have 
\begin{equation*}%\label{stability-estimate}
\norm{I^k}_\infty
\leq \norm{I_0}_\infty+\norm{I_1}_\infty+c^{+}\norm{q}_{\infty}T,\quad 0\leq k\leq T/\Delta t, 
\end{equation*}
where 
\[
\norm{I^k}_\infty:=\sup\left\{\abs{I^k_{i,j,n}};\ (x_{ij},\xi_n)\in X\cup\Gamma_{-}\right\}.
\]
Moreover if $q\geq 0$, $I_0\geq 0$, and $I_1\geq 0$ everywhere, then 
$I^k_{i,j,n}\geq 0$ holds for all $k,i,j,n$. 
\end{thm}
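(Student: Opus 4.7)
The plan is to first rewrite equation \eqref{2.2a} by solving explicitly for $I^{k+1}_{i,j,n}$, displaying it as a linear combination of $I^k$-values at $(i,j,n)$ and its upwind neighbours, plus a nonneg weight times $K_\Delta I^k_{i,j,n}$ and $q^k_{i,j,n}$. Concretely, move $\Sigma_\Delta I^{k+1}$ to the left and multiply by $c(x_{ij})\Delta t$; then the transport part $I^k_{i,j,n}+c(x_{ij})\Delta t\,A_\Delta I^k_{i,j,n}$ is split according to the signs of $\xi_{n,1},\xi_{n,2}$. For instance, when $\xi_{n,1}>0$ and $\xi_{n,2}\geq 0$, it becomes
\[
\Bigl(1-c(x_{ij})\Delta t\Bigl(\tfrac{\xi_{n,1}}{\Delta x_1}+\tfrac{\xi_{n,2}}{\Delta x_2}\Bigr)\Bigr)I^k_{i,j,n} + c(x_{ij})\Delta t\,\tfrac{\xi_{n,1}}{\Delta x_1}I^k_{i-1,j,n} + c(x_{ij})\Delta t\,\tfrac{\xi_{n,2}}{\Delta x_2}I^k_{i,j-1,n},
\]
with the three coefficients summing to $1$. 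Since $|\xi_{n,1}|,|\xi_{n,2}|\leq 1$, assumption \eqref{3.1a} forces the coefficient of $I^k_{i,j,n}$ to be nonnegative, so this is a convex combination; the other three sign cases are symmetric.

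Granted this rewriting, positivity is immediate by induction on $k$. If $I^k\geq 0$ everywhere, then the convex combination above is $\geq 0$, $c(x_{ij})\Delta t\,K_\Delta I^k_{i,j,n}\geq 0$ because $p,\mus\geq 0$, and $c(x_{ij})\Delta t\,q^k_{i,j,n}\geq 0$, so dividing by the positive factor $1+c(x_{ij})\Delta t\,\Sigma_{ij}$ yields $I^{k+1}_{i,j,n}\geq 0$ at interior points; boundary values inherit nonnegativity from $I_1$.

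For stability, take absolute values in the same identity and use $|I^k_{i',j',n'}|\leq N^k$. The transport part contributes at most $N^k$, the source at most $c(x_{ij})\Delta t\,\|q\|_\infty$, and the scattering part at most $c(x_{ij})\Delta t\,\mus(x_{ij})\,S_n(x_{ij})\,N^k$, where $S_n(x):=\Delta\theta\sum_\nu p(x;\xi_n,\xi_\nu)$. Because $p(x;\xi_n,\xi_\nu)=\tilde p(x;(n-\nu)\Delta\theta)$ and $\tilde p(x;\cdot)$ is $2\pi$-periodic and $C^2$, $S_n(x)$ is independent of $n$ and is the composite trapezoidal-rule approximation to $\int_0^{2\pi}\tilde p(x;\alpha)\,d\alpha=1$; the standard error formula gives $|S_n(x)-1|\leq \frac{\pi\Delta\theta^2}{6}\Norm{\tilde p_{\alpha\alpha}}_\infty$. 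Feeding \eqref{3.1b} together with the definition of $\mu^*$ into this inequality delivers $\mus(x_{ij})\,S_n(x_{ij})\leq \Sigma_{ij}$, whence $(1+c(x_{ij})\Delta t\,\Sigma_{ij})|I^{k+1}_{i,j,n}|\leq (1+c(x_{ij})\Delta t\,\Sigma_{ij})N^k + c(x_{ij})\Delta t\,\|q\|_\infty$, i.e.\ $|I^{k+1}_{i,j,n}|\leq N^k + c^+\Delta t\,\|q\|_\infty$. Combined with the boundary bound $|I^{k+1}_{i,j,n}|\leq \|I_1\|_\infty$ on $\Gamma_-$, induction on $k$ finishes the proof.

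The main obstacle is the scattering estimate in the stability step: one must thread the constant of the trapezoidal-rule error for periodic $C^2$ functions through the definition of $\mu^*$ in exactly the way \eqref{3.1b} permits, and handle separately the trivial case $\mus(x_{ij})=0$, in which $K_\Delta I^k_{i,j,n}$ vanishes and the scattering estimate is vacuous --- this is consistent with $\mu^*$ being defined only on $\{x:\mus(x)\neq 0\}$.
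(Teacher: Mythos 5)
Your architecture coincides with the paper's: solve \eqref{2.2a} explicitly for $I^{k+1}_{i,j,n}$, observe that under \eqref{3.1a} the transport part is a convex combination of $I^k_{i,j,n}$ and its upwind neighbours (this is exactly the paper's splitting $A_\Delta=-(\abs{\xi_{n,1}}/\Delta x_1+\abs{\xi_{n,2}}/\Delta x_2)+B_\Delta$ written out by sign cases), deduce positivity at once, and close the stability induction by absorbing the scattering term into the implicit factor $1+c\Delta t(\mus+\mua)$. All of that is correct, as is your separate treatment of the points where $\mus(x_{ij})=0$.

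There is, however, a genuine quantitative gap at the one step you yourself flagged as the crux. You invoke ``the standard error formula'' for the composite trapezoidal rule in the form $\abs{S_n(x)-1}\leq\frac{\pi\Delta\theta^2}{6}\Norm{\partial^2\tilde p/\partial\alpha^2}_\infty$; that is the generic constant $\frac{(b-a)h^2}{12}$ with $b-a=2\pi$. Feeding \eqref{3.1b} into it yields only $S_n\leq 1+2\mu^*$, hence $\mus S_n\leq\mus+2\mus\mu^*\leq\mus+2\mua$, which exceeds $\mus+\mua$ whenever $\mua>0$, so the asserted inequality $\mus S_n\leq\mus+\mua$ does not follow. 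The induction then fails to close with amplification factor $1$: one gets $\abs{I^{k+1}_{i,j,n}}\leq\frac{1+c\Delta t(\mus+2\mua)}{1+c\Delta t(\mus+\mua)}\norm{I^k}_\infty+\dotsb$, and taming the resulting growth over $T/\Delta t$ steps would require a uniform upper bound on $\mua$ (not assumed in this theorem) and would produce an exponential constant rather than the stated clean bound $\norm{I_0}_\infty+\norm{I_1}_\infty+c^+\norm{q}_\infty T$. The hypothesis \eqref{3.1b} is calibrated precisely to the sharper constant of Lemma~\ref{lem:trapezoid}: for a $2\pi$-periodic $C^2$ function the equispaced sum $\Delta\theta\sum_m f(\theta_m)$ approximates $\int_0^{2\pi}f$ with error $\frac{\pi}{12}f''(\theta')\Delta\theta^2$, half the generic trapezoidal constant, e.g.\ because by periodicity that same sum is the composite \emph{midpoint} rule for $\int_{-\Delta\theta/2}^{2\pi-\Delta\theta/2}f=\int_0^{2\pi}f$. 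With $\pi/12$ in place of $\pi/6$ you obtain $S_n\leq 1+\mu^*$, hence $\mus S_n\leq\mus+\mua$, and your argument closes exactly as written.
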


In the proofs below we write $c$, $\mua$, and $\mus$ instead of 
$c(x_{ij})$, $\mua(x_{ij})$, and $\mus(x_{ij})$, respectively. 

\begin{proof}
We show that 
\begin{equation}
\label{induction}
\norm{I^k}_\infty
\leq\norm{I_0}_\infty+\norm{I_1}_\infty+c^{+}\norm{q}_{\infty}t_k 
\end{equation}
for $0\leq k\leq T/\Delta t$ by induction on $k$. It is obvious for $k=0$. 
Assume now that \eqref{induction} holds for some $k\leq T/\Delta t-1$. 
The proof shall be finished by showing that 
\begin{equation}
\label{key}
\Abs{I^{k+1}_{i,j,n}}\leq\norm{I^k}_\infty+c^+\norm{q}_{\infty}\Delta t, 
\qquad (x_{ij},\xi_n)\in X. 
\end{equation}
Indeed, it follows from \eqref{induction} and \eqref{key} that 
\[
\Abs{I^{k+1}_{i,j,n}}\leq
\norm{I_0}_\infty+\norm{I_1}_\infty+c^+\norm{q}_{\infty}t_{k+1}, \quad (x_{ij},\xi_n)\in X. 
\]
Since $\Abs{I^{k+1}_{i,j,n}}=\abs{I_1(t_k,x_{ij},\xi_n)}\leq\norm{I_1}_\infty$ 
for $(x_{ij},\xi_n)\in\Gamma_{-}$, we obtain 
\[
\norm{I^{k+1}}_\infty\leq
\norm{I_0}_\infty+\norm{I_1}_\infty+c^+\norm{q}_{\infty}t_{k+1}. 
\]

We now show \eqref{key}. We introduce an auxiliary operator $B_\Delta$ by the relation 
\begin{equation*}%\label{aaa}
A_\Delta I^k_{i,j,n}=\left(-\frac{\abs{\xi_{n,1}}}{\Delta x_1}-\frac{\abs{\xi_{n,2}}}{\Delta x_2}\right)
I^k_{i,j,n}+B_\Delta I^k_{i,j,n},\qquad (x_{ij},\xi_n)\in X,
\end{equation*}
or explicitly 
\begin{align}
B_\Delta I^k_{i,j,n}
&=\frac{(\abs{\xi_{n,1}}-\xi_{n,1})I^k_{i+1,j,n}
  +(\abs{\xi_{n,1}}+\xi_{n,1})I^k_{i-1,j,n}}{2\Delta x_1}\nonumber\\
&\qquad+\frac{(\abs{\xi_{n,2}}-\xi_{n,2})I^k_{i,j+1,n}+(\abs{\xi_{n,2}}+\xi_{n,2})I^k_{i,j-1,n}}{2\Delta x_2}. 
\label{b-delta}
\end{align}
Then we write \eqref{2.2a} as 
\begin{align}
\{1&+c\Delta t(\mus+\mua)\}I^{k+1}_{i,j,n}\nonumber\\
&=\left(1-\abs{\xi_{n,1}}\frac{c\Delta t}{\Delta x_1}-\abs{\xi_{n,2}}\frac{c\Delta t}{\Delta x_2}\right)I^k_{i,j,n}
+c\Delta t\left(B_\Delta I^k_{i,j,n}+K_\Delta I^k_{i,j,n}+q^k_{i,j,n}\right).
\label{re-2.2a}
\end{align}
We find from \eqref{3.1a} that 
\begin{equation*}
1-\abs{\xi_{n,1}}\frac{c\Delta t}{\Delta x_1}-\abs{\xi_{n,2}}\frac{c\Delta t}{\Delta x_2}\geq 0,
\end{equation*}
and from \eqref{b-delta} that 
\begin{equation*}
\Abs{B_\Delta I^k_{i,j,n}}\leq
\left(\frac{\abs{\xi_{n,1}}}{\Delta x_1}+\frac{\abs{\xi_{n,2}}}{\Delta x_2}\right)\norm{I^k}_\infty, 
\end{equation*}
hence from \eqref{re-2.2a} that 
\begin{align}
\{1&+c\Delta t(\mus+\mua)\}\Abs{I^{k+1}_{i,j,n}}\nonumber\\
&\leq
\left(1-\abs{\xi_{n,1}}\frac{c\Delta t}{\Delta x_1}-\abs{\xi_{n,2}}\frac{c\Delta t}{\Delta x_2}\right)
\norm{I^k}_{\infty}+c\Delta t\left(\Abs{B_\Delta I^k_{i,j,n}}+\Abs{K_\Delta I^k_{i,j,n}}
+\norm{q}_{\infty}\right) \nonumber\\
&
\leq\norm{I^k}_\infty+c\Delta t\Abs{K_\Delta I^k_{i,j,n}}+c\Delta t\norm{q}_{\infty}.\label{yyy}
\end{align}
By \eqref{k-delta} and \eqref{pp} we have 
\begin{align}
\Abs{K_\Delta I^k_{i,j,n}}
\leq\mus\Delta\theta\sum_{\nu=0}^{M-1}p(x_{ij};\xi_n,\xi_\nu)\norm{I^k}_\infty
=\norm{I^k}_\infty\,\mus\Delta\theta\sum_{\nu=0}^{M-1}\tilde{p}(x_{ij};\theta_\nu-\theta_n).
\end{align}
Here we use the next lemma. 

\begin{lemma}\label{lem:trapezoid}
Let $f(\theta)$ be a $2\pi$-periodic function of class $C^2$. 
Let $M$ be a positive integer and 
\[
\Delta\theta=\frac{2\pi}{M},\quad \theta_m=m\Delta\theta,\quad m=0,1,\dotsc,M-1.
\]
Then there exists a point $\theta'\in[0,2\pi)$ such that 
\[
\int_{0}^{2\pi}f(\theta)\,d\theta-\Delta\theta\sum_{m=0}^{M-1}f(\theta_m)
=\frac{\pi}{12}{f''(\theta')}\Delta\theta^2.
\]
\end{lemma}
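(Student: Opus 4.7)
The plan is to exploit the $2\pi$-periodicity of $f$ to reinterpret the sum $\Delta\theta\sum_{m=0}^{M-1}f(\theta_m)$ as a midpoint-rule approximation over a shifted period, apply the classical single-subinterval midpoint remainder, and then invoke the intermediate value theorem on $f''$ to collapse the resulting sum into a single evaluation $f''(\theta')$.

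First I would view each $\theta_m=m\Delta\theta$ as the midpoint of the subinterval $[\theta_m-\Delta\theta/2,\,\theta_m+\Delta\theta/2]$. These $M$ subintervals tile the shifted period $[-\Delta\theta/2,\,2\pi-\Delta\theta/2]$, and by $2\pi$-periodicity of $f$,
\[
\int_{-\Delta\theta/2}^{2\pi-\Delta\theta/2} f(t)\,dt = \int_0^{2\pi} f(t)\,dt.
\]
Thus $\Delta\theta\sum_{m=0}^{M-1} f(\theta_m)$ is precisely the midpoint-rule quadrature for the integral on the left.

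Next, on each subinterval I would establish the one-panel midpoint remainder
\[
\int_{\theta_m-\Delta\theta/2}^{\theta_m+\Delta\theta/2} f(t)\,dt - \Delta\theta\,f(\theta_m) = \frac{\Delta\theta^3}{24}\,f''(\eta_m)
\]
for some $\eta_m$ in the subinterval. To obtain this I would set $\psi(u)=\int_{\theta_m-u}^{\theta_m+u} f\,dt - 2u\,f(\theta_m)$, verify $\psi(0)=\psi'(0)=\psi''(0)=0$ and $\psi'''(v)=f''(\theta_m+v)+f''(\theta_m-v)$, express $\psi(\Delta\theta/2)$ as an iterated integral of $\psi'''$, apply the mean value theorem for integrals against the nonnegative weight $(\Delta\theta/2-v)^2$, and use the continuity of $f''$ (the intermediate value theorem) to fold the two symmetric values $f''(\theta_m\pm v^\ast)$ into a single $f''(\eta_m)$. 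Summing over $m=0,1,\ldots,M-1$ then produces
\[
\int_0^{2\pi} f\,d\theta - \Delta\theta\sum_{m=0}^{M-1} f(\theta_m) = \frac{\Delta\theta^3}{24}\sum_{m=0}^{M-1} f''(\eta_m).
\]
Continuity of $f''$ on the compact interval $[0,2\pi]$ together with the intermediate value theorem supplies $\theta'\in[0,2\pi)$ with $f''(\theta')=\frac{1}{M}\sum_{m=0}^{M-1} f''(\eta_m)$, and the identity $M\Delta\theta=2\pi$ converts the right-hand side into $\frac{2\pi\Delta\theta^2}{24}\,f''(\theta')=\frac{\pi}{12}\,f''(\theta')\,\Delta\theta^2$, as claimed.

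I expect the only subtlety to lie in the initial reinterpretation: one must recognize that, for a periodic integrand, the given nodal sum coincides not only with the trapezoidal-rule sum on the original partition but also with the midpoint-rule sum on the shifted partition, and that the midpoint viewpoint delivers the sharper per-panel constant $1/24$ (giving the global $\pi/12$) rather than the trapezoidal constant $1/12$ (which would yield $\pi/6$). Once this observation is in place, the Taylor-type derivation of the single-subinterval remainder and the final averaging via the intermediate value theorem are both routine.
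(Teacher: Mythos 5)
Your proof is correct. The paper in fact gives no proof of this lemma at all --- it only remarks that the result ``can be shown routinely by taking the periodicity of $f$ into account'' --- so your write-up supplies exactly the missing argument, and your use of periodicity (shifting to the partition with nodes $\theta_m\pm\Delta\theta/2$) is the natural realization of that hint. The one genuinely nontrivial point, which you identify correctly, is that the periodic nodal sum must be read as a \emph{midpoint} rule on the shifted partition in order to obtain the per-panel remainder $\frac{\Delta\theta^3}{24}f''(\eta_m)$ and hence the global constant $\frac{M\Delta\theta^3}{24}=\frac{\pi}{12}\Delta\theta^2$; reading it as a trapezoidal sum on the original partition would only give $\frac{\pi}{6}\Delta\theta^2$, which is not what the lemma asserts (and one can check with $f(\theta)=\cos(M\theta)$ that $\frac{\pi}{12}$ is essentially sharp for the mean-value form). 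Your derivation of the one-panel remainder via $\psi(u)=\int_{\theta_m-u}^{\theta_m+u}f\,dt-2uf(\theta_m)$, the weighted mean value theorem with the nonnegative weight $(\Delta\theta/2-v)^2$, and the final averaging of the $f''(\eta_m)$ by the intermediate value theorem are all standard and carried out correctly.
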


This can be shown routinely by taking the periodicity of $f$ into account. 

Applying Lemma~\ref{lem:trapezoid} to the function $f(\theta)=\tilde{p}(x_{ij};\theta-\theta_n)$, 
we find 
\begin{align*}
\int_{0}^{2\pi}\tilde{p}(x_{ij};\theta-\theta_n)\,d\theta
-\Delta\theta\sum_{\nu=0}^{M-1}\tilde{p}(x_{ij};\theta_\nu-\theta_n)
&=\frac{\pi}{12}\Delta\theta^2
\left.\dfrac{\partial^2}{\partial\theta^2}\tilde{p}(x_{ij};\theta-\theta_n)\right\rvert_{\theta=\theta'} 
\end{align*}
for some $\theta'={\theta'}_{i,j,n}$. 
Since the integral of $\tilde{p}$ is equal to $1$, 
we get 
\begin{equation}
\label{zzz}
\Delta\theta\sum_{\nu=0}^{M-1}\tilde{p}(x_{ij};\theta_\nu-\theta_n)
=1-\frac{\pi}{12}\Delta\theta^2
\left.\dfrac{\partial^2}{\partial\theta^2}\tilde{p}(x_{ij};\theta-\theta_n)\right\rvert_{\theta=\theta'}
\leq 1+\mu^* 
\end{equation}
by using \eqref{3.1b}. 
Combining \eqref{yyy}--\eqref{zzz} and observing that $\mus\mu^*\leq\mua$ on $\Omega$, we obtain 
\[
\{1+c\Delta t(\mus+\mua)\}\Abs{I^{k+1}_{i,j,n}}
\leq\{1+c\Delta t(\mus+\mua)\}\norm{I^k}_\infty+c\norm{q}_{\infty}\Delta t. 
\]
Dividing both sides by $1+c\Delta t(\mus+\mua)$, we arrive at \eqref{key}.

The positivity is shown as follows: 
if $I^k_{i,j,n}\geq 0$ for all $i,j,n$ at a time level $k$, 
then we have 
$K_\Delta I^k_{i,j,n}\geq 0$ by \eqref{k-delta}, and $B_\Delta I^k_{i,j,n}\geq 0$ by \eqref{b-delta}. 
Hence $I^{k+1}_{i,j,n}\geq 0$ at the next time level $k+1$ by \eqref{re-2.2a} and $q\geq 0$. 
\end{proof}

\begin{thm}
\label{thm:3.3}
{\bf (Convergence for $\boldsymbol{d=2}$)} 
In addition to the hypotheses of Theorem \ref{thm:3.1}, 
we suppose the following three conditions: 
(i) 
the mesh ratios $\Delta t/\Delta x_1$ and $\Delta t/\Delta x_2$ are kept fixed; 
(ii) 
there exist positive numbers $\mua^+$ and $\mus^+$ such that 
\[
0\leq\mua(x)\leq\mua^+,\quad 0\leq\mus(x)\leq\mus^+,\qquad x\in\Omega;
\]
and (iii) 
the solution $I(t,x,\xi)$ of \eqref{2.1} belongs to 
$C^2\bigl([0,T)\times(X\cup\Gamma_{-})\bigr)$ and is bounded together with its 
derivatives of the first and the second order. 
Then we have 
\begin{equation*}%\label{convergence}
\|I(t_k,\cdot,\cdot)-I^k\|_\infty\leq C(\Delta t + \Delta\theta^2), \quad 0\leq k\leq T/\Delta t,
\end{equation*}
where $C$ is a positive number independent of $\Delta t$ and $\Delta\theta$.
\end{thm}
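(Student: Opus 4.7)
The plan is a textbook consistency-plus-stability argument, leveraging the fact that the stability estimate in Theorem~\ref{thm:3.1} applies to \emph{any} solution of the linear scheme~\eqref{2.2}. First I would introduce the grid error
\[
e^k_{i,j,n} := I(t_k,x_{ij},\xi_n)-I^k_{i,j,n},
\]
which vanishes on $\{k=0\}$ by \eqref{2.1b}--\eqref{2.2b} and on $\Gamma_-$ by \eqref{2.1c}--\eqref{2.2c}. Substituting the exact solution of \eqref{2.1} into the discrete equation \eqref{2.2a} produces a residual $\tau^k_{i,j,n}$ on the right-hand side, and subtracting \eqref{2.2a} shows that $e^k_{i,j,n}$ satisfies exactly the scheme \eqref{2.2} with $I_0$, $I_1$ replaced by $0$ and with $q^k_{i,j,n}$ replaced by $\tau^k_{i,j,n}$. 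Theorem~\ref{thm:3.1}, applied to this error equation, then yields
\[
\norm{e^k}_\infty \le c^+\,\norm{\tau}_\infty\,T,
\]
reducing the problem to proving $\norm{\tau}_\infty = O(\Delta t+\Delta\theta^2)$.

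Next I would bound $\tau^k_{i,j,n}$ by Taylor-expanding around $(t_k,x_{ij},\xi_n)$. Hypothesis (iii) supplies the uniform first- and second-derivative bounds on $I$ needed throughout, and (ii) keeps $\mus+\mua$ bounded. The forward time difference contributes $O(\Delta t)$. Each one-sided spatial difference produced by rewriting $A_\Delta$ as in the Remark following~\eqref{k-delta} is $O(\Delta x_1)$ or $O(\Delta x_2)$, uniformly in the sign-case of $\xi_n$. The implicit evaluation of $\Sigma_\Delta$ at $t_{k+1}$ rather than $t_k$ contributes an additional $O(\Delta t)$ since $(\mus+\mua)\,\partial_t I$ is bounded. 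The quadrature error of $K_\Delta$ is handled by Lemma~\ref{lem:trapezoid} applied to $\theta\mapsto \tilde{p}(x_{ij};\theta-\theta_n)\,I\bigl(t_k,x_{ij},(\cos\theta,\sin\theta)\bigr)$; its second $\theta$-derivative is uniformly bounded thanks to the $C^2$ control of $\tilde p$ and of $I$ in the angular variable, so this piece is $O(\Delta\theta^2)$.

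Combining these pieces gives $\norm{\tau}_\infty \le C'(\Delta t+\Delta x_1+\Delta x_2+\Delta\theta^2)$, and the fixed mesh ratios in (i) absorb $\Delta x_1,\Delta x_2$ into multiples of $\Delta t$, yielding $\norm{\tau}_\infty \le C''(\Delta t+\Delta\theta^2)$. Feeding this into the stability bound above completes the proof.

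The only non-routine point is bookkeeping: one must verify case-by-case, across the four sign patterns of $(\xi_{n,1},\xi_{n,2})$, that $A_\Delta$ really is the genuine one-sided upwind operator, so that its consistency error is honestly $O(\Delta x_i)$ rather than $O(1)$; and one must check that the implicit placement of the $\Sigma_\Delta$ term costs only $O(\Delta t)$ rather than $O(1)$. Everything else is immediate once the stability estimate of Theorem~\ref{thm:3.1} is in hand.
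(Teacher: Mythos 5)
Your proposal is correct and follows essentially the same route as the paper: define the truncation error by inserting the exact solution into \eqref{2.2a}, bound it by $O(\Delta t+\Delta x_1+\Delta x_2+\Delta\theta^2)$ via Taylor expansion and Lemma~\ref{lem:trapezoid} applied to $\theta\mapsto p(x_{ij};\xi_n,\eta)I(t_k,x_{ij},\eta)$, absorb $\Delta x_1,\Delta x_2$ using the fixed mesh ratios, and then apply the stability argument of Theorem~\ref{thm:3.1} to the error equation with homogeneous data and source $\tau$. This matches the paper's proof in both structure and detail.
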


\begin{proof}
Let $I(t,x,\xi)$ be the solution of \eqref{2.1} satisfying (iii). 
We define the local truncation error $\tau^k_{i,j,n}$ 
by the relation 
\begin{align}
&\frac{1}{c}\frac{I(t_{k+1},x_{ij},\xi_n)-I(t_k,x_{ij},\xi_n)}{\Delta t}\nonumber\\
&\qquad=A_\Delta I(t_k,x_{ij},\xi_n)-\Sigma_\Delta I(t_{k+1},x_{ij},\xi_n)+K_\Delta I(t_k,x_{ij},\xi_n)
+q^k_{i,j,n}+\tau^k_{i,j,n},
\label{3.2}
\end{align}
where the operators $A_\Delta$, $\Sigma_\Delta$, and $K_\Delta$ 
are applied to $I(t_k,x_{ij},\xi_n)$ in place of $I^k_{i,j,n}$.
By using equation \eqref{2.1a}, Taylor's theorem, and 
by applying Lemma \ref{lem:trapezoid} to the function 
\[
\text{$f(\theta)=p(x_{ij};\xi_n,\eta)I(t_k,x_{ij},\eta)$ with $\eta=(\cos\theta,\sin\theta)$}, 
\]
we arrive at 
\begin{align*}
\tau^k_{i,j,n}
&=\frac{1}{2c}\frac{\partial^2 I}{\partial t^2}(t_k',x_{ij},\xi_n)\Delta t
  -\frac{\xi_{n,1}}{2}\frac{\partial^2 I}{\partial {x_1}^2}(t_k,x'_{ij},\xi_n)\Delta x_1
  -\frac{\xi_{n,2}}{2}\frac{\partial^2 I}{\partial {x_2}^2}(t_k,x''_{ij},\xi_n)\Delta x_2\\
&\qquad+(\mus+\mua)\frac{\partial I}{\partial t}(t_k'',x_{ij},\xi_n)\Delta t
  +\frac{\pi\mus}{12}\Delta\theta^2
  \left.\frac{\partial^2}{\partial\theta^2}
  p(x_{ij};\xi_n,\eta)I(t_k,x_{ij},\eta)\right\vert_{\theta=\theta'}, 
\end{align*}
for some $t'_k$, $t''_k$, $x'_{ij}$, and $x''_{ij}$. 
Multiplying this equality by $c$ and using the assumptions, we obtain 
\begin{equation}
\label{eee}
c^+\sup_{k,i,j,n}\abs{\tau^k_{i,j,n}}\leq C'(\Delta t+\Delta\theta^2), 
\end{equation}
with $C'$ independent of $\Delta t$ and $\Delta\theta$. 

Next we consider the discretization error $E^k_{i,j,n}:=I^k_{i,j,n}-I(t_k,x_{ij},\xi_n)$. 
We subtract \eqref{3.2} from \eqref{2.2a} to have 
\begin{align*}
\frac{1}{c}\frac{E^{k+1}_{i,j,n}-E^k_{i,j,n}}{\Delta t}
=A_\Delta E^k_{i,j,n}-\Sigma_\Delta E^{k+1}_{i,j,n}+K_\Delta E^k_{i,j,n}-\tau^k_{i,j,n},\qquad & \nonumber\\
0\leq k\leq T/\Delta t-1,\quad (x_{ij},\xi_n)\in X.& 
\end{align*}
We observe that $E=\{E^k_{i,j,n}\}$ satisfies 
equation \eqref{2.2a} with $q^k_{i,j,n}=-\tau^k_{i,j,n}$, 
and the homogeneous initial-boundary conditions. 
Therefore the proof of Theorem \ref{thm:3.1} shows that 
\[
\norm{E^{k+1}}_{\infty}\leq 0+0+Tc^+\sup_{k,i,j,n}\abs{\tau^k_{i,j,n}},
\qquad 0\leq k\leq T/\Delta t-1.
\]
Then \eqref{eee} gives 
\[
\norm{E^k}_\infty%\leq C'k\Delta t(\Delta t+\Delta\theta^2)
\leq C(\Delta t+\Delta\theta^2),
\qquad 0\leq k\leq T/\Delta t,
\]
with $C:=TC'$. 
\end{proof}

\begin{remark}\label{rem:analytic}
Here we consider a particular situation: 
$\tilde{p}(x;\theta)$ is independent of $x$ and 
analytic in $\theta$ with period $2\pi$. We write $\tilde{p}(\theta)=\tilde{p}(x;\theta)$ 
and expand it into the Fourier series 
\begin{equation}\label{FT}
\tilde{p}(\theta)=\sum_{n=-\infty}^{\infty}c_n e^{in\theta},\qquad
c_n=\frac{1}{2\pi}\int_{0}^{2\pi}\tilde{p}(\theta)e^{-in\theta}\,d\theta.
\end{equation}
Since $\tilde{p}(\theta)$ is analytic, there exist numbers 
$C>0$ and $0<r<1$ such that (see \cite{Kress}) 
\begin{gather}
\abs{c_n}\leq Cr^{\abs{n}},\qquad n\in\Z,\nonumber\\
\Abs{\int_{0}^{2\pi}\tilde{p}(\theta)d\theta-\Delta\theta\sum_{\nu=0}^{M-1}\tilde{p}(\theta_m)}
\leq\frac{4\pi Cr^M}{1-r^M},\qquad M\in\N,\ \Delta\theta=\frac{2\pi}{M}.\label{A}
\end{gather}
Then the conclusions of Theorem \ref{thm:3.1} and Theorem \ref{thm:3.3} can be obtained if we assume 
\begin{equation}
\frac{4\pi Cr^M}{1-r^M}\leq\mu^*,\label{B}
\end{equation}
instead of \eqref{3.1b}. 
Indeed, since $\displaystyle\int_{0}^{2\pi}\tilde{p}(\theta)\,d\theta=1$, 
we find from \eqref{A} and \eqref{B} that 
\[
\Delta\theta\sum_{\nu=0}^{M-1}\tilde{p}(\theta_\nu-\theta_n)
%\leq\left(1+\frac{4\pi Cr^M}{1-r^M}\right)
\leq 1+\mu^*, 
\]
which corresponds to \eqref{zzz}. 
The proofs of the theorems are valid without further modification. 
\end{remark}

\begin{example}
In researches on diffuse optical tomography \cite{Arridge,ArridgeSchotland}, the 
main interest is in the three-dimensional case, and the Poisson kernel
\[
\tilde{p}(\theta)=\frac{1}{4\pi}\frac{1-g^2}{(1-2g\cos\theta+g^2)^{3/2}},\quad \text{with}\quad 
0\leq g<1,
\]
is frequently used as the phase function. 
The Poisson kernel is often referred to as the Henyey-Greenstein kernel in that field and 
the parameter $g$ indicates the strength of forward scattering by the medium \cite{Henyey}. 
If $g=0$, then $\tilde{p}(\theta)={1}/{4\pi}$ identically 
and the particles will be scattered uniformly in all directions. If $g$ gets 
closer to $1$, the proportion of 
photons scattered in directions near the incident one gets larger. 

Since we are dealing with the two-dimensional case in this section, 
we consider the two-dimensional analogue: 
\begin{equation*}
%\label{2D_Poisson}
\tilde{p}(\theta)=\frac{1}{2\pi}\frac{1-g^2}{1-2g\cos\theta+g^2}\quad \text{with}\quad 
0\leq g<1. 
\end{equation*}
Its Fourier series expansion %of \eqref{2D_Poisson} 
is 
\[
\tilde{p}(\theta)=\frac{1}{2\pi}\sum_{n=-\infty}^{\infty}g^{\abs{n}}e^{in\theta},
\quad\text{i.e., $\quad c_n=\frac{g^{\abs{n}}}{2\pi}$ in \eqref{FT},}
\]
so that, by Remark \ref{rem:analytic}, the scheme is stable if $\Delta x_1$, 
$\Delta x_2$, $\Delta t$, and $M$ satisfy \eqref{3.1a} and 
\begin{equation}\label{5.1}
\frac{2g^M}{1-g^M}\leq\mu^*.
\end{equation}
If $g=0$, in particular, the condition \eqref{5.1} trivially holds, so that 
the stability is independent of $M$. 
\end{example}

\section{Numerical solution of the stationary problem for RTE}
\label{sect:stat}

We turn our attention to the initial-boundary value problem \eqref{2.1} 
for an infinite interval of time $0<t<\infty$ with time-independent inputs 
$q=q(x,\xi)$ and $I_1=I_1(x,\xi)$. 
We consider the scheme \eqref{2.2} for all $k=0,1,2,\dotsc$ making an additional assumption that 
\[
(c\mua)^-:=\inf_{x\in\Omega}c(x)\mua(x)>0.
\]

Our aim here is to show that the scheme \eqref{2.2} serves an iterative method 
for approximating the solution of the corresponding stationary problem: 
\begin{subequations}\label{8.1}
\begin{gather}
-\xi\cdot\nabla_x J-\bigl(\mus(x)+\mua(x)\bigr)J+\mus(x)\int_{S^1}p(x;\xi,\xi')J(x,\xi')\,d\omega_{\xi'}
+q(x,\xi)=0,\quad (x,\xi)\in X,\label{8.1a}\\
J(x,\xi)=I_1(x,\xi),\qquad (x,\xi)\in\Gamma_{-}.\label{8.1b}
\end{gather}
\end{subequations}
We discretize the problem \eqref{8.1} as follows: 
\begin{subequations}\label{8.2}
\begin{align}
(A_\Delta-\Sigma_\Delta+K_\Delta)J_{i,j,n}+q_{i,j,n}=0,&\qquad(x_{ij},\xi_n)\in X,\label{8.2a}\\
J_{i,j,n}=I_1(x_{ij},\xi_n),&\qquad(x_{ij},\xi_n)\in\Gamma_{-},\label{8.2b}
\end{align}
\end{subequations}
with $J_{i,j,n}$ corresponding to $J(x_{ij},\xi_n)$. 
Then \eqref{8.2} is uniquely solvable for sufficiently small $\Delta\theta$ 
as the next theorem shows. 

\begin{thm}
\label{thm:solvable}
Suppose that $\Delta\theta$ satisfies 
\[
\norm{\tilde{p}}_\infty\Delta\theta\leq 1\quad\text{and}\quad
\Norm{\frac{\partial^2\tilde{p}}{\partial\theta^2}}_\infty\Delta\theta^2
<\frac{12}{\pi}\mu^*.
\]
Then there exists a unique solution $J_\Delta=\{J_{i,j,n}\}$ of \eqref{8.2} and it satisfies 
\begin{equation}\label{stat-stab}
\norm{J_\Delta}_\infty:=\sup_{i,j,n}\Abs{J_{i,j,n}}\leq\norm{I_1}_\infty+C\norm{q}_\infty,
\end{equation}
where $C$ is a positive number independent of $\Delta x_1$, $\Delta x_2$, and $\Delta\theta$. 
\end{thm}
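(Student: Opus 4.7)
The plan is to cast \eqref{8.2} as a finite-dimensional square linear system in the interior unknowns $\{J_{i,j,n}\}$ and to prove an $\ell^\infty$ a priori bound of the form $\norm{J_\Delta}_\infty\leq\norm{I_1}_\infty+C\norm{q}_\infty$. Applied to the homogeneous problem ($q\equiv 0$, $I_1\equiv 0$) this bound forces $J_\Delta\equiv 0$, which gives uniqueness; existence then follows from elementary linear algebra for square systems. The quantitative form of the estimate is exactly \eqref{stat-stab}, so the a priori bound will deliver both conclusions at once.

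For the a priori estimate I would reuse the rewriting from the proof of Theorem \ref{thm:3.1}. Splitting $A_\Delta$ through the auxiliary operator $B_\Delta$ from \eqref{b-delta}, equation \eqref{8.2a} becomes
\[
\left(\frac{\abs{\xi_{n,1}}}{\Delta x_1}+\frac{\abs{\xi_{n,2}}}{\Delta x_2}+\mus+\mua\right)J_{i,j,n}
=B_\Delta J_{i,j,n}+K_\Delta J_{i,j,n}+q_{i,j,n},
\]
with non-negative coefficients throughout and a strictly positive coefficient on the left. Combining the bound $\Abs{B_\Delta J_{i,j,n}}\leq(\abs{\xi_{n,1}}/\Delta x_1+\abs{\xi_{n,2}}/\Delta x_2)\norm{J_\Delta}_\infty$ from \eqref{b-delta} with $\Abs{K_\Delta J_{i,j,n}}\leq\mus(1+\alpha)\norm{J_\Delta}_\infty$, where $\alpha:=\tfrac{\pi}{12}\Norm{\partial^2\tilde{p}/\partial\theta^2}_\infty\Delta\theta^2$, as obtained from Lemma~\ref{lem:trapezoid} just as in \eqref{zzz}, reduces this identity, when evaluated at an interior grid point at which $\norm{J_\Delta}_\infty$ is attained, to $(\mua-\mus\alpha)\norm{J_\Delta}_\infty\leq\norm{q}_\infty$. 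If instead the supremum is attained on $\Gamma_{-}$, then $\norm{J_\Delta}_\infty\leq\norm{I_1}_\infty$ directly from \eqref{8.2b}.

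The main technical step, and the place where both coefficient hypotheses of the theorem are invested, is producing a positive lower bound for $\mua(x)-\mus(x)\alpha$ that is uniform in $x$. From the strict inequality $\alpha<\mu^*$ and from $\mus(x)/\mua(x)\leq 1/\mu^*$ wherever $\mus(x)\neq 0$ (and trivially where $\mus(x)=0$) one deduces $\mua-\mus\alpha\geq(1-\alpha/\mu^*)\mua$, and this is bounded below by $(1-\alpha/\mu^*)(c\mua)^-/c^+$ thanks to the new assumption $(c\mua)^->0$; this positive quantity plays the role of $1/C$. Combining the two cases above then yields $\norm{J_\Delta}_\infty\leq\norm{I_1}_\infty+C\norm{q}_\infty$, which is \eqref{stat-stab}, and uniqueness and existence for \eqref{8.2} follow as explained. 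I note that the extra hypothesis $\norm{\tilde{p}}_\infty\Delta\theta\leq 1$ does not appear in this maximum-principle argument; I expect the author to use it in a more constructive proof of existence, for instance to ensure that a Jacobi- or Gauss--Seidel-type iteration for \eqref{8.2}, obtained by inverting only the diagonal part of $K_\Delta$, stays well-defined and converges.
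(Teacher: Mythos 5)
The paper does not actually prove Theorem~\ref{thm:solvable}: it defers the proof to the reference \cite{Fujiwara}, so there is no in-paper argument to compare yours against. Taken on its own terms, your discrete-maximum-principle proof is sound. The reduction to a square linear system is legitimate because the upwind structure of $B_\Delta$ guarantees that every neighbour value entering an interior equation with a nonzero coefficient lies either in $X$ or in $\Gamma_-$ (the diagonal corner points are never referenced), so the homogeneous a priori bound does give invertibility and hence existence. The key cancellation at an interior maximizer, $(\mua-\mus\alpha)\norm{J_\Delta}_\infty\leq\norm{q}_\infty$ with $\alpha:=\tfrac{\pi}{12}\Norm{\partial^2\tilde{p}/\partial\theta^2}_\infty\Delta\theta^2$, is correct, and you rightly identify that the lower bound $\mua-\mus\alpha\geq(1-\alpha/\mu^*)(c\mua)^-/c^+$ is exactly where the section's standing assumption $(c\mua)^->0$ is needed; without it the estimate on $\norm{q}_\infty$ fails.

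Two caveats. First, the constant you produce, $C=c^+/\bigl((1-\alpha/\mu^*)(c\mua)^-\bigr)$, is \emph{not} independent of $\Delta\theta$: it blows up as $\alpha\uparrow\mu^*$, which the bare strict-inequality hypothesis permits. Your argument therefore yields the $\Delta\theta$-uniform constant claimed in \eqref{stat-stab} only for $\Delta\theta$ restricted so that $\alpha$ stays bounded away from $\mu^*$ (e.g.\ $\alpha\leq\mu^*/2$, which holds for all sufficiently small $\Delta\theta$). This is a defect you should at least acknowledge; it may equally be an imprecision in the theorem statement, but as written your proof does not deliver the stated uniformity. Second, your observation that $\norm{\tilde{p}}_\infty\Delta\theta\leq 1$ plays no role in the maximum-principle estimate is accurate (the same hypothesis also goes unused in the paper's proof of Theorem~\ref{thm:4.1}); it is presumably inherited from the solvability analysis in \cite{Fujiwara}, consistent with your guess.
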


We refer to \cite{Fujiwara} for a proof of Theorem \ref{thm:solvable} and 
instead show that the scheme \eqref{2.2} generates an iterative method 
for solving \eqref{8.2}.

\begin{thm}
\label{thm:4.1}
Suppose that the discretization parameters for the scheme \eqref{2.2} 
are fixed so that 
\begin{equation*}%\label{4.1}
\dfrac{c^+\Delta t}{\Delta x_1}+\dfrac{c^+\Delta t}{\Delta x_2}\leq 1,\quad
\norm{\tilde{p}}_\infty\Delta\theta\leq 1,\quad\text{and}\quad
\Norm{\frac{\partial^2\tilde{p}}{\partial\alpha^2}}_\infty\Delta\theta^2
<\dfrac{12}{\pi}\mu^*.
\end{equation*}
Then there exists a number $\rho$, $0<\rho<1$, such that 
\[
\norm{I^k-J_\Delta}_\infty\leq \left(\norm{I_0}_\infty+\norm{I_1}_\infty\right)\rho^k,\qquad
k\geq 0.
\]
\end{thm}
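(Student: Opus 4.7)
The plan is to reduce Theorem~\ref{thm:4.1} to a strict contraction estimate for the error $e^k_{i,j,n} := I^k_{i,j,n}-J_{i,j,n}$. Since both $I^k$ and $J_\Delta$ satisfy their respective schemes with the same stationary source $q$ and the same boundary datum $I_1$ on $\Gamma_-$, subtracting \eqref{8.2a} from \eqref{2.2a} gives
\[
\frac{1}{c}\frac{e^{k+1}_{i,j,n}-e^k_{i,j,n}}{\Delta t}=A_\Delta e^k_{i,j,n}-\Sigma_\Delta e^{k+1}_{i,j,n}+K_\Delta e^k_{i,j,n},\quad (x_{ij},\xi_n)\in X,
\]
with $e^k_{i,j,n}=0$ on $\Gamma_-$ for all $k$. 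Thus $e^k$ obeys the scheme \eqref{2.2} with zero source and zero boundary data, and initial value $e^0_{i,j,n}=I_0(x_{ij},\xi_n)-J_{i,j,n}$ in the interior.

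Next I would rerun the interior estimate from the proof of Theorem~\ref{thm:3.1}, now exploiting the \emph{strict} inequality in the third hypothesis. Setting $\kappa:=\frac{\pi}{12}\Norm{\partial^2\tilde{p}/\partial\alpha^2}_\infty\Delta\theta^2$, the analogue of \eqref{zzz} reads $\Delta\theta\sum_\nu\tilde{p}(x_{ij};\theta_\nu-\theta_n)\leq 1+\kappa$, with $\kappa<\mu^*$ strictly. Tracing this through \eqref{re-2.2a} and \eqref{yyy} with $q\equiv 0$ yields
\[
\{1+c\Delta t(\mus+\mua)\}\Abs{e^{k+1}_{i,j,n}}\leq\{1+c\Delta t(\mus+\mus\kappa)\}\norm{e^k}_\infty,
\]
that is, $\Abs{e^{k+1}_{i,j,n}}\leq \rho_{ij}\norm{e^k}_\infty$ with $\rho_{ij}=1-c\Delta t(\mua-\mus\kappa)/\{1+c\Delta t(\mus+\mua)\}$.

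The main obstacle is producing a uniform $\rho<1$ independent of $(i,j)$, since $\mua$ and $\mus$ are not assumed bounded above. The definition of $\mu^*$ gives $\mua\geq\mus\mu^*$ pointwise (trivially when $\mus=0$), whence $\mua-\mus\kappa\geq\mua(1-\kappa/\mu^*)>0$ and $\mus\leq\mua/\mu^*$, so $1+c\Delta t(\mus+\mua)\leq 1+c\Delta t\,\mua(1+\mu^*)/\mu^*$. Writing $a:=c(x_{ij})\mua(x_{ij})\Delta t$, the map $a\mapsto a/(1+a(1+\mu^*)/\mu^*)$ is increasing in $a>0$, so the standing hypothesis $(c\mua)^->0$ gives $a\geq (c\mua)^-\Delta t>0$ and hence a uniform bound $\rho_{ij}\leq\rho<1$ depending only on $(c\mua)^-$, $\mu^*$, $\kappa$, and the fixed $\Delta t$. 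Since $e^{k+1}=0$ on $\Gamma_-$, this pointwise bound lifts to $\norm{e^{k+1}}_\infty\leq\rho\norm{e^k}_\infty$; iterating yields $\norm{e^k}_\infty\leq\rho^k\norm{e^0}_\infty$. The proof is completed by bounding $\norm{e^0}_\infty\leq\norm{I_0}_\infty+\norm{J_\Delta}_\infty$ and invoking Theorem~\ref{thm:solvable} to dominate $\norm{J_\Delta}_\infty$ by $\norm{I_1}_\infty$ (with the $C\norm{q}_\infty$ contribution absorbed into the constant on the right-hand side).
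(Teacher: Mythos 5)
Your proof is correct and follows essentially the same route as the paper's: subtract the stationary scheme to obtain a homogeneous evolution for the error, use the strict inequality in the angular condition to get a pointwise contraction factor, and uniformize it via $\mus\leq\mua/\mu^*$, the standing assumption $(c\mua)^->0$, and monotonicity in $c\mua\Delta t$. The one (shared) blemish is the initial bound: both you and the paper only get $\norm{I^0-J_\Delta}_\infty\leq\norm{I_0}_\infty+\norm{I_1}_\infty+C\norm{q}_\infty$ from \eqref{stat-stab}, so the constant in the stated conclusion really carries the extra $C\norm{q}_\infty$ term unless $q\equiv 0$; you at least flag this explicitly.
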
 

\begin{proof}
We put $R^k_{i,j,n}=I^k_{i,j,n}-J_{i,j,n}$. 
Writing \eqref{8.2a} as 
\[
0=\frac{1}{c}\frac{J_{i,j,n}-J_{i,j,n}}{\Delta t}
=(A_\Delta-\Sigma_\Delta+K_\Delta)J_{i,j,n}+q_{i,j,n},\quad(x_{ij},\xi_n)\in X,
\]
and subtracting it from \eqref{2.2a}, we have 
\[
\frac{1}{c}\frac{R^{k+1}_{i,j,n}-R^k_{i,j,n}}{\Delta t}=A_\Delta R^k_{i,j,n}-\Sigma_\Delta R^{k+1}_{i,j,n}
+K_\Delta R^k_{i,j,n},\quad k\geq 0,\quad (x_{ij},\xi_n)\in X, 
\]
Let $\lambda$ be a number, $0<\lambda<1$, determined by 
\[
\Norm{\frac{\partial^2\tilde{p}}{\partial\alpha^2}}_\infty\Delta\theta^2
=\lambda\dfrac{12}{\pi}\mu^*. 
\]
Reasoning as in the proof of Theorem \ref{thm:3.1}, we obtain 
\begin{align*}
\{1+c\Delta t(\mus+\mua)\}\Abs{R^{k+1}_{i,j,n}}
&\leq\{1+c\Delta t(\mus+\lambda\mua)\}\norm{R^k}_\infty,\quad k\geq 0,\quad 
(x_{ij},\xi_n)\in X. 
\end{align*}
Since $R^{k+1}_{i,j,n}=0$ for $(x_{ij},\xi_n)\in\Gamma_{-}$, 
we have 
\[
\norm{R^{k+1}}_\infty
\leq\left(
\sup_{\Omega}
\frac{1+c\mus\Delta t+\lambda c\mua\Delta t}{1+c\mus\Delta t+\phantom{\lambda}c\mua\Delta t}
\right)
\norm{R^k}_\infty.
\]
Observing that $\mus\leq\mua/\mu^*$ and $c\mua\geq(c\mua)^-$ on $\Omega$, we see that 
\begin{align*}
\frac{1+c\mus\Delta t+\lambda c\mua\Delta t}{1+c\mus\Delta t+\phantom{\lambda}c\mua\Delta t}
&\leq
\frac{1+c\mua\Delta t/\mu^*+\lambda c\mua\Delta t}{1+c\mua\Delta t/\mu^*+\phantom{\lambda}c\mua\Delta t}
=
\frac{1+c\mua\Delta t(1/\mu^*+\lambda)}{1+c\mua\Delta t(1/\mu^*+1)}\\
&\leq
\frac{1+(c\mua)^-\Delta t(1/\mu^*+\lambda)}{1+(c\mua)^-\Delta t(1/\mu^*+1)}=:\rho<1.
\end{align*}
Thus we obtain $\norm{R^{k+1}}_{\infty}\leq\rho\norm{R^k}_{\infty}$ for all $k\geq 0$ and hence 
\[
\norm{I^k-J_\Delta}_\infty=\norm{R^k}_\infty
\leq\rho^k\norm{R^0}_\infty=\rho^k\norm{I^0-J_\Delta}_\infty,\quad k\geq 0. 
\]
The proof completes by using \eqref{stat-stab}. 
\end{proof}

\section{Numerical Examples}
\label{sect:num}

Here we consider a square domain $\Omega = (0,50)\times(0,50)\subset\R^2$ of sides $50$ mm 
and assume that the domain is occupied by a medium of uniform optical properties: 
$\mua = 0.08$ mm$^{-1}$, $\mus = 1.09$ mm$^{-1}$, and $c = 0.196$ mm/ps. 
Units of length and time in our numerical computation are chosen to be 
millimeter and picosecond, respectively. We employ the Poisson kernel 
\[
\tilde{p}(\theta)=\frac{1}{2\pi}\frac{1-g^2}{1-2g\cos\theta+g^2},\quad g=0.9
\]
as the phase function. The boundary condition is set to 
\[
I_1(t,x,\xi) =
\begin{cases}
\dfrac{1}{\sqrt{2\pi}\sigma}
    \exp\left(-\dfrac{(\theta-\pi/2)^2}{2\sigma^2}\right),
    \quad & 24.9 \leq x_1 \leq 25.1,\ x_2 = 0,\\
0, \quad & \text{otherwise},
\end{cases}
\]
where $\sigma = 0.2$ and $\xi = (\cos\theta,\sin\theta)$. The initial 
condition is $I_0(x,\xi) = 0$ in $X$, and $I_0(x,\xi)$ 
and $I_1(t,x,\xi)$ satisfy compatibility on $\Gamma_{-}$. The parameters 
and the inputs are chosen so as to simulate a phantom experiment carried 
out at Human Brain Research Center, Kyoto University \cite{HBRC}. 

Numerical computation is processed with $\Delta t = 0.1$, 
$\Delta x = \Delta y = 0.1$, and $\Delta \theta = 2\pi / 60$. 
These satisfy the stability conditions \eqref{3.1a} and \eqref{5.1}, since 
\[
60=M \ge \log_g \dfrac{\mua}{2\mus + \mua} \approx 31.71.
\]
Fig.~\ref{fig:xi} shows the numerical solution $I^k_{i,j,n}$ for several points $x_{ij}$. 
The solution is shown in the polar coordinate with respect to $\theta$ for $\xi = (\cos\theta,\sin\theta)$. 
\begin{figure}[tbp]
\subfigure[$t=50 \quad (k=500)$]{\includegraphics[width=.5\textwidth]{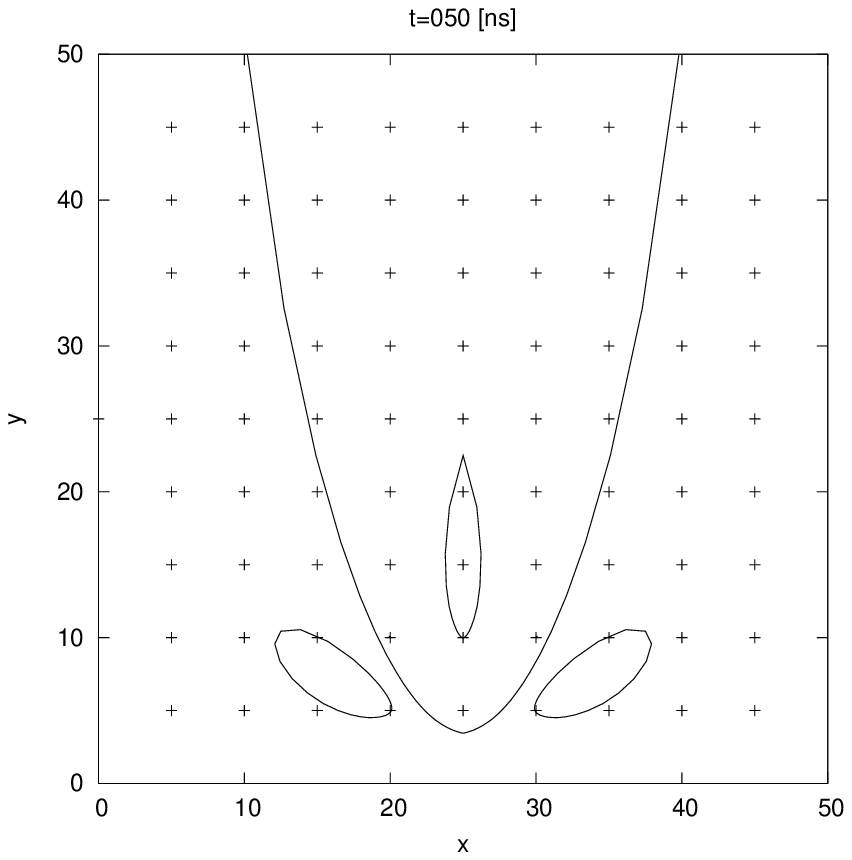}}
\subfigure[$t=100 \quad (k=1000)$]{\includegraphics[width=.5\textwidth]{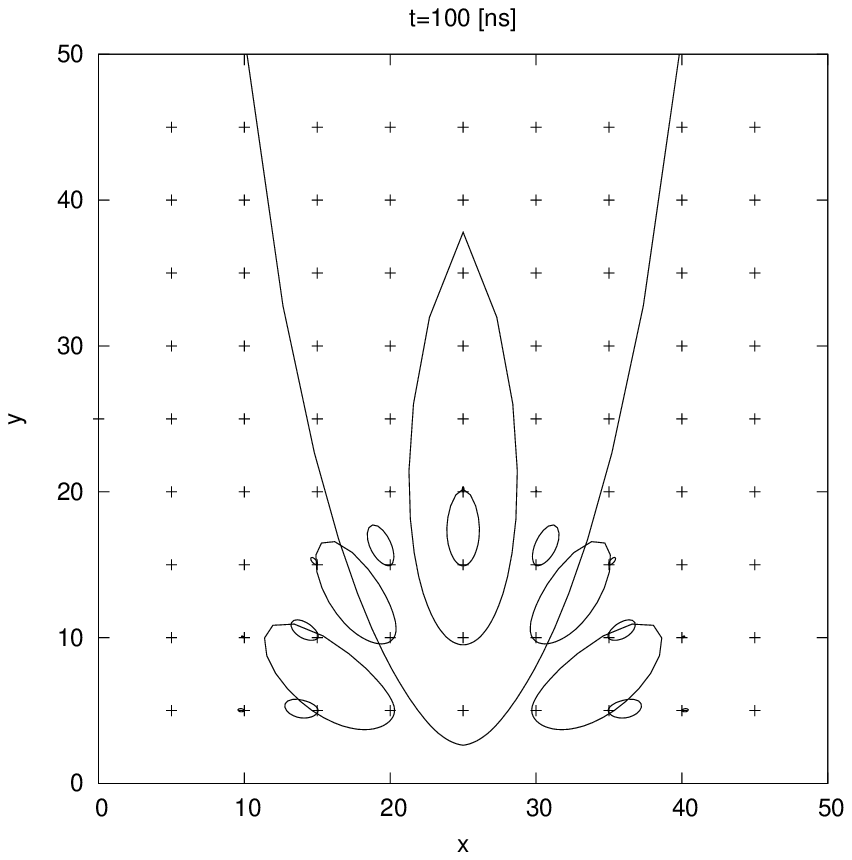}}
\subfigure[$t=200 \quad (k=2000)$]{\includegraphics[width=.5\textwidth]{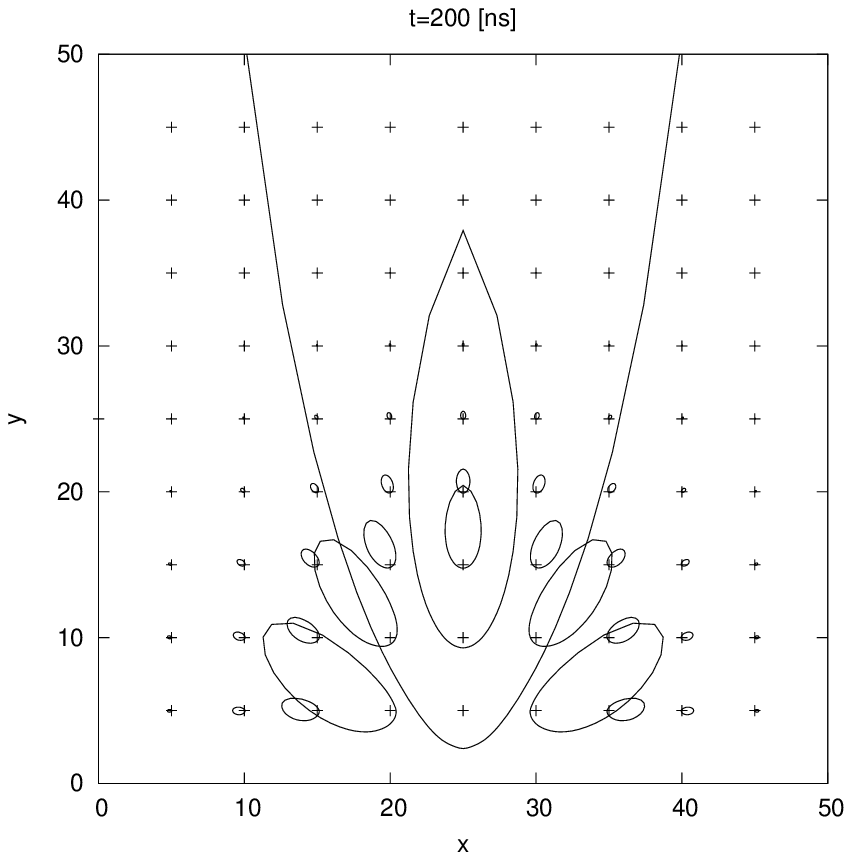}}
\subfigure[$t=400 \quad (k=4000)$]{\includegraphics[width=.5\textwidth]{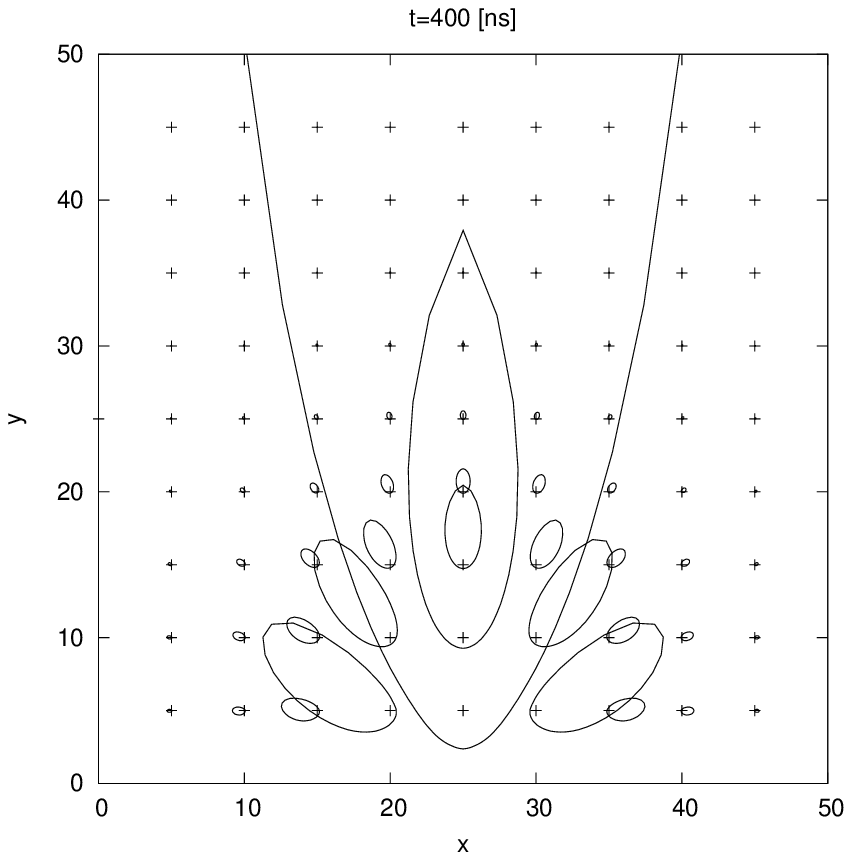}}%<-- previously t=300
%\subfigure[steady state]{\includegraphics[width=.5\textwidth]{steady-xi-mono}}
\caption{\label{fig:xi}Numerical Solutions $I^k_{i,j,n}$ 
in polar coordinate with respect to $\xi \in S^1$ at point $x_{ij}$. 
}
\end{figure}
Fig.~\ref{fig:contourlightintensity} shows contour lines of integrated 
light intensity $\displaystyle\int_{S^1} I(t,x,\xi) d\omega_{\xi}$ at 
time $t$ and position $x$. 
\begin{figure}[tbp]
\subfigure[$t=50 \quad (k=500)$]{\includegraphics[width=.5\textwidth]{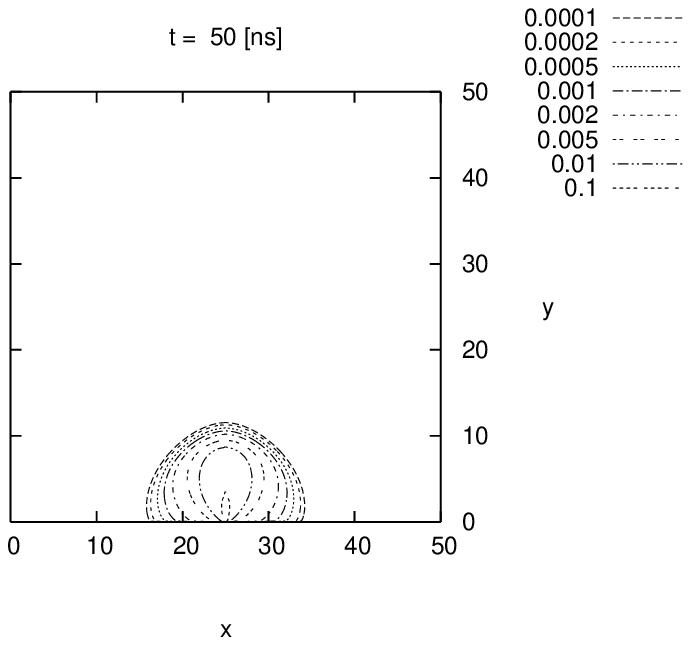}}
\subfigure[$t=100 \quad (k=1000)$]{\includegraphics[width=.5\textwidth]{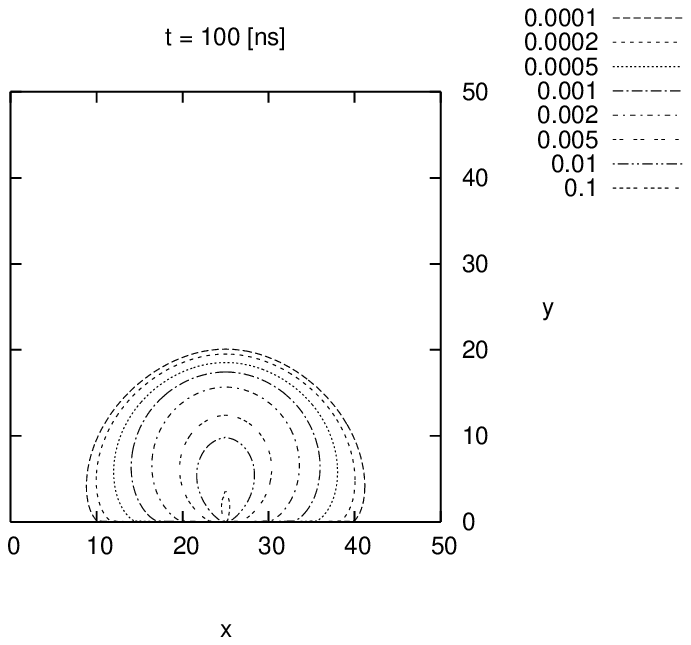}}
\subfigure[$t=200 \quad (k=2000)$]{\includegraphics[width=.5\textwidth]{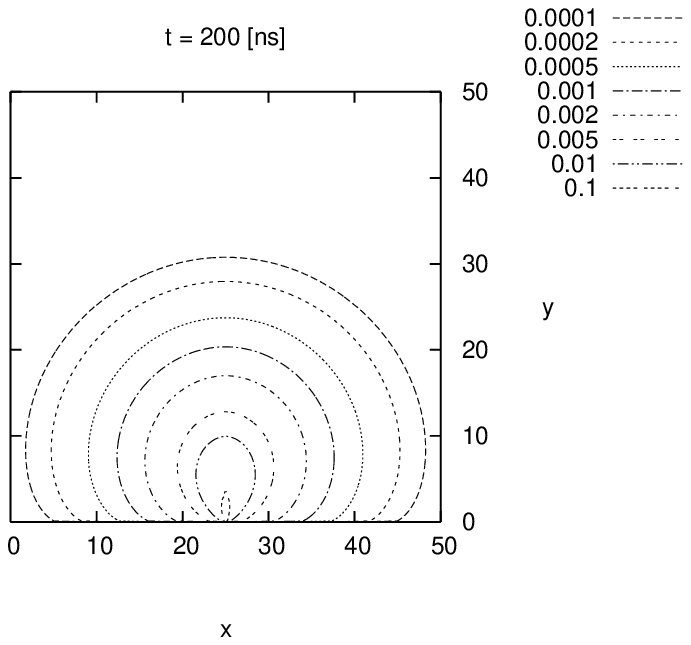}}
\subfigure[$t=400 \quad (k=4000)$]{\includegraphics[width=.5\textwidth]{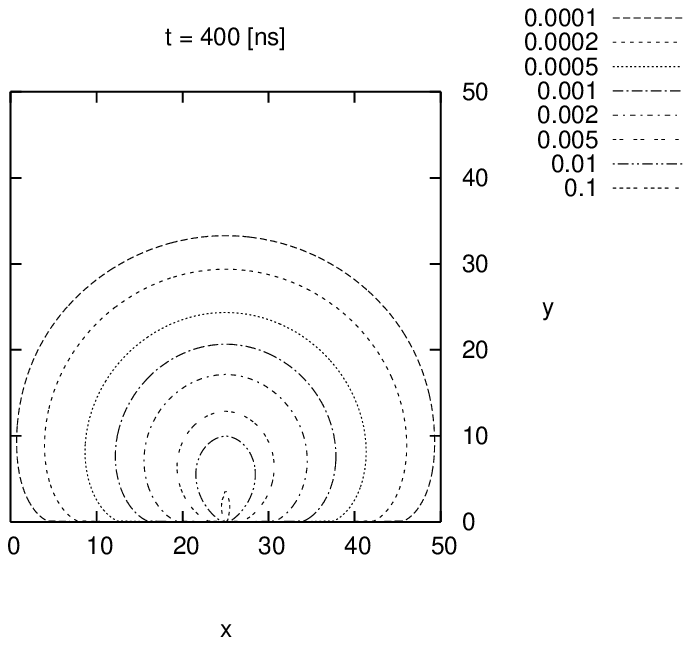}}%<-- previously t=300
%\subfigure[steady state]{\includegraphics[width=.5\textwidth]{steady-cntr-mono}}
\caption{\label{fig:contourlightintensity}Contour of Integrated Light Intensity}
\end{figure}

As we have seen in \S \ref{sect:stat}, the sequence of grid 
functions $\{I^k\}_{k=0}^{\infty}$ generated by the scheme \eqref{2.2} 
converges to the solution $J_\Delta$ of \eqref{8.2} as $k$ 
tends to infinity. The convergence of $\{I^k\}$ to $J_\Delta$ 
is shown in Fig.~\ref{fig:convergence}, in which the horizontal axis is 
the number of time step $k$ and the vertical axis is the difference in 
logarithmic scale. The cross signs $(\times)$ show $\| I^k - I^{k-1} 
\|_\infty$ and the plus signs $(+)$ show $\| I^k - J_\Delta \|_\infty$. 
Both are approximately proportional to $0.99807^k$, which means that 
$\{I^k\}$ converges to $J_\Delta$ exponentially fast. 
The difference $\norm{I^k-J_\Delta}_\infty$ is saturated around $10^{-12}$ 
since the solution $J_\Delta$ of \eqref{8.2} %the stationary transport equation 
is found by an iterative method with tolerance $10^{-12}$. 
On the other hand, $\norm{I^k-I^{k-1}}_\infty$ is not less than $10^{-16}$ 
because of rounding errors in IEEE 754 double precision. 
The conclusion of Theorem \ref{thm:4.1} is employed to determine the time step 
at which we stop the calculation of the scheme \eqref{2.2}. 

\begin{figure}[tbp]
\begin{center}
\includegraphics[width=.8\textwidth]{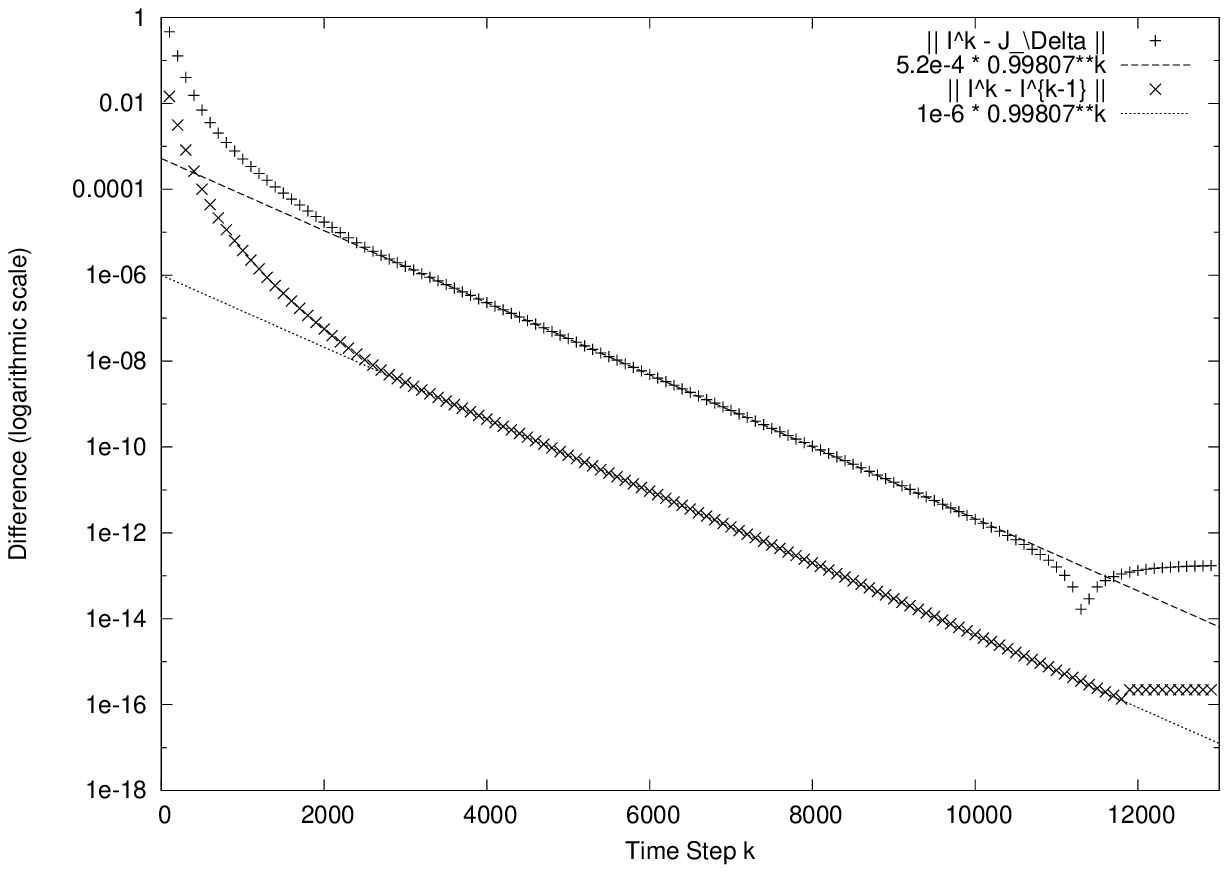}
\end{center}
\caption{\label{fig:convergence}Difference between Numerical Solutions and Steady State}
\end{figure}

\section{Three-dimensional case}\label{sect:3D}

We conclude the paper by a brief account of our numerical scheme in the 
three-dimensional case. 
Let $\Omega=(0,L_1)\times(0,L_2)\times(0,L_3)$ be a rectangular domain in $\R^3$, and 
put $X=\Omega\times S^2$. 
Let $M_1$, $M_2$, $M_3$, $M_\theta$, and $M_\phi$ be positive integers, and 
$\Delta t$ be a positive number. We consider the grid points $(t_k,x_{ijl},\xi_{mn})$ given by 
\begin{align*}
\Delta x_i&=L_i/M_i\quad (i=1,2,3), \quad 
\Delta\theta={\pi}/{M_\theta}, \quad \Delta\phi={2\pi}/{M_\phi}, \\
t_k&=k\Delta t, \quad 
x_{ijl}=(i\Delta x_1,j\Delta x_2,l\Delta x_3), \quad 
\theta_m=m\Delta\theta, \quad \phi_n=n\Delta\phi, \\
\xi_{mn}&=(\xi_{mn,1},\,\xi_{mn,2},\,\xi_{mn,3})=
(\sin\theta_m\cos\phi_n,\,\sin\theta_m\sin\phi_n,\,\cos\theta_m), 
\end{align*}
where $k,i,j,l,m,n\in\Z$. Using $I^k_{i,j,l,m,n}$ as the value 
corresponding to $I(t_k,x_{ijl},\xi_{mn})$, we discretize the problem 
\eqref{2.1} for $d=3$ as follows: 
\begin{subequations}\label{3d-scheme}
\begin{align}
\frac{1}{c(x_{ijl})}\frac{I^{k+1}_{i,j,l,m,n}-I^{k}_{i,j,l,m,n}}{\Delta t}
&=A_\Delta I^k_{i,j,l,m,n}-\Sigma_\Delta I^{k+1}_{i,j,l,m,n}+K_\Delta I^k_{i,j,l,m,n}+q^k_{i,j,l,m,n},\nonumber\\
&\,\qquad\qquad\qquad\qquad\qquad 0\leq k\leq T/\Delta t-1,\ (x_{ijl},\xi_{mn})\in X,\\
I^0_{i,j,l,m,n}&=I_0(x_{ijl},\xi_{mn}),
\quad\qquad (x_{ijl},\xi_{mn})\in X,\\
I^k_{i,j,l,m,n}&=I_1(t_k,x_{ijl},\xi_{mn}),
\qquad 0\leq k\leq T/\Delta t,\,\ (x_{ijl},\xi_{mn})\in\Gamma_{-},
\end{align}
\end{subequations}
where $q^k_{i,j,l,m,n}=q(t_k,x_{ijl},\xi_{mn})$, and the operators $A_\Delta$, $\Sigma_\Delta$, and $K_\Delta$ are defined by 
\begin{align*}
&A_\Delta I^k_{i,j,l,m,n}=\\
&\quad-\xi_{mn,1}\frac{I^k_{i+1,j,l,m,n}-I^k_{i-1,j,l,m,n}}{2\Delta x_1}
  +\abs{\xi_{mn,1}}\frac{I^k_{i+1,j,l,m,n}-2I^k_{i,j,l,m,n}+I^k_{i-1,j,l,m,n}}{2\Delta x_1}\\
&\quad-\xi_{mn,2}\frac{I^k_{i,j+1,l,m,n}-I^k_{i,j-1,l,m,n}}{2\Delta x_2}
  +\abs{\xi_{mn,2}}\frac{I^k_{i,j+1,l,m,n}-2I^k_{i,j,l,m,n}+I^k_{i,j-1,l,m,n}}{2\Delta x_2}\\
&\quad-\xi_{mn,3}\frac{I^k_{i,j,l+1,m,n}-I^k_{i,j,l-1,m,n}}{2\Delta x_3}
  +\abs{\xi_{mn,3}}\frac{I^k_{i,j,l+1,m,n}-2I^k_{i,j,l,m,n}+I^k_{i,j,l-1,m,n}}{2\Delta x_3},\\
&\Sigma_\Delta I^k_{i,j,l,m,n}=\{\mus(x_{ijl})+\mua(x_{ijl})\}I^k_{i,j,l,m,n},\\
&K_\Delta I^k_{i,j,l,m,n}=\mus(x_{ijl})\Delta\theta\Delta\phi
\sum_{\mu=1}^{M_\theta-1}\sum_{\nu=0}^{M_\phi-1}p(x_{ijl};\xi_{mn},\xi_{\mu\nu})\sin\theta_\mu
I^k_{i,j,l,\mu,\nu}.
\end{align*}

Sufficient conditions for the stability and convergence are given as follows. 

\begin{thm}
\label{thm:maximum} 
{\bf (Stability and positivity for $\boldsymbol{d=3}$)} 
Suppose that the discretization parameters satisfy 
\begin{subequations}
\begin{gather}
\dfrac{c^+\Delta t}{\Delta x_1}+\dfrac{c^+\Delta t}{\Delta x_2}+\dfrac{c^+\Delta t}{\Delta x_3}
\leq 1,\\
\Delta\theta^2\sup
\Abs{\frac{\partial^2}{\partial\theta^2}p\bigl(x;\xi,\xi'(\theta,\phi)\bigr)\sin\theta}
+\frac{\Delta\phi^2}{2}\sup
\Abs{\frac{\partial^2}{\partial\phi^2}p\bigl(x;\xi,\xi'(\theta,\phi)\bigr)\sin\theta}
\leq\frac{6}{\pi^2}\mu^*,
\label{3.3}
\end{gather}
\end{subequations}
where we put 
\[
\xi'(\theta,\phi)=(\sin\theta\cos\phi,\,\sin\theta\sin\phi,\,\cos\theta)
\]
and the supremum is taken over $(x,\xi)\in X,\ 0\leq\theta\leq\pi,\ 0\leq\phi\leq 2\pi$ 
in \eqref{3.3}. Then we have 
\[
\norm{I^k}_\infty\leq\norm{I_0}_\infty+\norm{I_1}_\infty+c^+\norm{q}_{\infty}T,\qquad 
0\leq k\leq T/\Delta t.
\]
Moreover if $q\geq 0$, $I_0\geq 0$, and $I_1\geq 0$, we have $I^k_{i,j,l,m,n}\geq 0$ for all $k,i,j,l,m,n$. 
\end{thm}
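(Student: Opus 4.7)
The plan is to follow the proof of Theorem \ref{thm:3.1} with only formal changes for the extra spatial direction, plus a careful analysis of the composite trapezoidal quadrature on $S^2$. First I would prove by induction on $k$ the bound
\[
\norm{I^k}_\infty\leq\norm{I_0}_\infty+\norm{I_1}_\infty+c^+\norm{q}_\infty t_k,
\]
which reduces to proving the analogue of \eqref{key} at a single $(x_{ijl},\xi_{mn})\in X$. As before, introduce an auxiliary operator $B_\Delta$ by
\[
A_\Delta I^k_{i,j,l,m,n}=-\Bigl(\frac{\abs{\xi_{mn,1}}}{\Delta x_1}+\frac{\abs{\xi_{mn,2}}}{\Delta x_2}+\frac{\abs{\xi_{mn,3}}}{\Delta x_3}\Bigr)I^k_{i,j,l,m,n}+B_\Delta I^k_{i,j,l,m,n},
\]
whose six off-diagonal coefficients are all nonnegative and sum to the diagonal, so $\abs{B_\Delta I^k_{i,j,l,m,n}}$ is bounded by that diagonal times $\norm{I^k}_\infty$. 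The CFL-type condition then makes the coefficient of $I^k_{i,j,l,m,n}$ in the explicit equation nonnegative, exactly as in \eqref{re-2.2a}.

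The real work is to bound the scattering quadrature. Setting $g(\theta,\phi)=p(x_{ijl};\xi_{mn},\xi'(\theta,\phi))\sin\theta$, I aim to show
\[
\Delta\theta\Delta\phi\sum_{\mu=1}^{M_\theta-1}\sum_{\nu=0}^{M_\phi-1}g(\theta_\mu,\phi_\nu)\leq 1+\mu^*,
\]
which would yield $\abs{K_\Delta I^k_{i,j,l,m,n}}\leq \mus(1+\mu^*)\norm{I^k}_\infty$ and then, via $\mus\mu^*\leq\mua$, reproduce \eqref{yyy}--\eqref{zzz}. Since $\int_{S^2}p\,d\omega_{\xi'}=1$, it suffices to bound the two-dimensional trapezoidal-rule error. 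For each fixed $\theta$, $g(\theta,\cdot)$ is $C^2$ and $2\pi$-periodic, so Lemma \ref{lem:trapezoid} gives an error of magnitude at most $\tfrac{\pi}{12}\Delta\phi^2\sup\abs{\partial^2_\phi g}$; integrating in $\theta\in[0,\pi]$ multiplies this by $\pi$. For each fixed $\phi_\nu$, the crucial observation is that $g$ vanishes at $\theta=0,\pi$ owing to the factor $\sin\theta$, so $\Delta\theta\sum_{\mu=1}^{M_\theta-1}g(\theta_\mu,\phi_\nu)$ coincides with the ordinary composite trapezoidal rule on $[0,\pi]$, whose $C^2$-error is bounded by $\tfrac{\pi}{12}\Delta\theta^2\sup\abs{\partial^2_\theta g}$; summing in $\phi_\nu$ with weight $\Delta\phi$ multiplies this by $2\pi$. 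Collecting, the total quadrature error is bounded by
\[
\frac{\pi^2}{6}\Delta\theta^2\sup\Abs{\partial^2_\theta g}+\frac{\pi^2}{12}\Delta\phi^2\sup\Abs{\partial^2_\phi g}\leq\mu^*,
\]
where the final inequality is precisely condition \eqref{3.3} multiplied through by $\pi^2/6$.

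With this quadrature bound in hand, the rest is a transcription of the two-dimensional argument: dividing through by $1+c\Delta t(\mus+\mua)$ closes the induction step, while at boundary nodes $\abs{I^{k+1}_{i,j,l,m,n}}\leq\norm{I_1}_\infty$ directly. Positivity is immediate from the explicit form analogous to \eqref{re-2.2a}: the CFL bound keeps the diagonal coefficient nonnegative, $B_\Delta I^k\geq 0$ by construction whenever $I^k\geq 0$, and $K_\Delta I^k\geq 0$ because $p\geq 0$ and the weight $\sin\theta_\mu\geq 0$ for $1\leq\mu\leq M_\theta-1$. The only genuine obstacle is the $S^2$ quadrature-error analysis described above; everything else is a routine reindexing of the two-dimensional template.
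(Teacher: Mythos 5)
Your proposal is correct and follows essentially the same route as the paper: the same reduction to the two-dimensional induction template, and the same splitting of the $S^2$ quadrature error into an azimuthal part (handled by Lemma \ref{lem:trapezoid}) and a polar part (handled by the standard composite trapezoidal error, using that $\sin\theta$ kills the endpoint terms at $\theta=0,\pi$), arriving at exactly the bound that condition \eqref{3.3} is designed to control. The only cosmetic difference is that you bound each partial error directly by suprema and then integrate or sum, where the paper first localizes the errors at intermediate points via mean value theorems; the resulting constants are identical.
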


\begin{proof}
We can show that the solution of \eqref{3d-scheme} satisfies 
\begin{equation}
\label{replace-yyy}
\{1+c\Delta t(\mus+\mua)\}\Abs{I^{k+1}_{i,j,l,m,n}}
\leq\norm{I^k}_{\infty}+c\Delta t\Abs{K_\Delta I^{k}_{i,j,l,m,n}}+c\Delta t\norm{q}_{\infty}
\end{equation}
by repeating the argument which lead to \eqref{yyy} in Theorem \ref{thm:3.1}. 
Next we shall show that 
\begin{equation}
\label{replace-zzz}
\Delta\theta\Delta\phi
\sum_{\mu=1}^{M_\theta-1}\sum_{\nu=0}^{M_\phi-1}p(x_{ijl};\xi_{mn},\xi_{\mu\nu})\sin\theta_\mu
\leq 1+\mu^*,\quad (x_{ijl},\xi_{mn})\in X,
\end{equation}
which corresponds to \eqref{zzz}. 
The proof proceeds as in Theorem \ref{thm:3.1} 
by replacing \eqref{yyy} and \eqref{zzz} with \eqref{replace-yyy} and 
\eqref{replace-zzz}, respectively. 

To complete the proof we derive \eqref{replace-zzz}. 
Fix $(x_{ijl},\xi_{mn})\in X$ and consider the functions 
\begin{align}
g(\theta,\phi)&:=p\Bigl(x_{ijl};\xi_{mn},\xi'(\theta,\phi)\Bigr)\label{g=p},
\\
G(\theta)&:=\int_{0}^{2\pi}g(\theta,\phi)\,d\phi - 
\Delta\phi\sum_{\nu=0}^{M_\phi-1}g(\theta,\phi_\nu).
\label{G(theta)}
\end{align}
By the mean value theorem for integrals, 
there exists a point $\theta'\in(0,\pi)$ such that 
\begin{equation}
\label{int-mean}
\int_{0}^{\pi}G(\theta)\sin\theta\,d\theta=\pi G(\theta')\sin\theta'. 
\end{equation}
Applying Lemma \ref{lem:trapezoid} to the function $f(\phi)=g(\theta',\phi)$, 
we find a point $\phi'\in[0,2\pi)$ such that 
\begin{equation}
\label{G(theta')}
G(\theta')
=\frac{\pi}{12}\Delta\phi^2g_{\phi\phi}(\theta',\phi'). 
\end{equation}
We put \eqref{G(theta)} and \eqref{G(theta')} into \eqref{int-mean} to see that 
\begin{align*}
\int_{0}^{\pi}\int_{0}^{2\pi}g(\theta,\phi)\sin\theta\,d\phi d\theta
-\Delta\phi\sum_{\nu=0}^{M_\phi-1}\int_{0}^{\pi}g(\theta,\phi_\nu)\sin\theta \,d\theta
=\frac{\pi^2}{12}\Delta\phi^2 g_{\phi\phi}(\theta',\phi')\sin\theta'. 
\end{align*}
By the standard error estimate for the composite trapezoidal rule 
(see, e.g., \cite{Kress} Theorem 12.1), we have 
\begin{align*}
\int_{0}^{\pi}g(\theta,\phi_\nu)\sin\theta \,d\theta
=\Delta\theta\sum_{\mu=1}^{M_\theta-1}g(\theta_\mu,\phi_\nu)\sin\theta_\mu
-\frac{\pi}{12}\Delta\theta^2
\left.\frac{\partial^2}{\partial\theta^2}g(\theta,\phi_\nu)\sin\theta\right\vert_{\theta=\Theta_{\nu}}
\end{align*}
for some $\Theta_{\nu}\in(0,\pi)$. Thus 
\begin{align}
&\int_{0}^{\pi}\int_{0}^{2\pi}g(\theta,\phi)\sin\theta\,d\phi d\theta
-\Delta\phi\Delta\theta\sum_{\nu=0}^{M_\phi-1}
\sum_{\mu=1}^{M_\theta-1}g(\theta_\mu,\phi_\nu)\sin\theta_\mu\nonumber\\
&\qquad\qquad=\frac{\pi^2}{12}\Delta\phi^2 g_{\phi\phi}(\theta',\phi')\sin\theta'
-\frac{\pi}{12}\Delta\theta^2\Delta\phi\sum_{\nu=0}^{M_\phi-1}
\left.\frac{\partial^2}{\partial\theta^2}
g(\theta,\phi_\nu)\sin\theta\right\vert_{\theta=\Theta_{\nu}}.\label{aa}
\end{align}
Since $\dfrac{\partial^2}{\partial\theta^2}g(\theta,\phi)\sin\theta$ is continuous on 
$0\leq\theta\leq\pi,\,0\leq\phi\leq 2\pi$, the mean value theorem shows that 
there exists $(\theta'',\phi'')\in[0,\pi)\times[0,2\pi)$ such that 
\begin{equation}
\label{bb}
\Delta\phi\sum_{\nu=0}^{M_\phi-1}
\left.\frac{\partial^2}{\partial\theta^2}
g(\theta,\phi_\nu)\sin\theta\right\vert_{\theta=\Theta_{\nu}}
=2\pi\left.\frac{\partial^2}{\partial\theta^2}g(\theta,\phi'')\sin\theta
\right\vert_{\theta=\theta''}. 
\end{equation}
Noticing that %It follows from \eqref{3Dtildep} and \eqref{g=p} that 
\[
\int_{0}^{\pi}\int_{0}^{2\pi}g(\theta,\phi)\sin\theta \,d\phi \,d\theta
=\int_{S^2}p(x_{ijl};\xi_{mn},\xi')\,d\omega_{\xi'}=1,
\]
we obtain 
\[
\Delta\theta\Delta\phi\sum_{\mu=1}^{M_\theta-1}\sum_{\nu=0}^{M_\phi-1}g(\theta_\mu,\phi_\nu)\sin\theta_\mu
\leq 1+\mu^*
\]
from \eqref{aa}, \eqref{bb}, and \eqref{3.3}. 
\end{proof}

\begin{thm}
\label{thm:conv}
{\bf (Convergence for $\boldsymbol{d=3}$)} 
Suppose besides the hypotheses of Theorem \ref{thm:maximum} that 
(i) the mesh ratios $\Delta t/\Delta x_i$ ($i=1,2,3$) are kept fixed; 
(ii) the coefficients $\mua$ and $\mus$ are bounded; 
and (iii) the solution $I$ of the problem 
\eqref{2.1} belongs to $C^2\bigl([0,T)\times(X\cup\Gamma_{-})\bigr)$ 
and is bounded together with its derivatives of the first and the second order. 
Then we have 
\[
\|I(t_k,\cdot,\cdot)-I^k\|_\infty\leq C(\Delta t + \Delta\theta^2 +\Delta\phi^2),
\quad 0\leq k\leq T/\Delta t,
\]
where $C$ is a positive number independent of $\Delta t$, $\Delta\theta$, and $\Delta\phi$. 
\end{thm}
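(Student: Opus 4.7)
The plan is to follow the blueprint of the two-dimensional convergence proof (Theorem \ref{thm:3.3}) with the angular quadrature error analysis of Theorem \ref{thm:maximum} doing the work in the extra variable. First, I would define the local truncation error $\tau^k_{i,j,l,m,n}$ by plugging the exact solution $I(t_k,x_{ijl},\xi_{mn})$ into the scheme \eqref{3d-scheme}, that is, by the identity
\begin{align*}
\frac{1}{c}\frac{I(t_{k+1},x_{ijl},\xi_{mn})-I(t_k,x_{ijl},\xi_{mn})}{\Delta t}
&= A_\Delta I(t_k,x_{ijl},\xi_{mn}) - \Sigma_\Delta I(t_{k+1},x_{ijl},\xi_{mn}) \\
&\quad + K_\Delta I(t_k,x_{ijl},\xi_{mn}) + q^k_{i,j,l,m,n} + \tau^k_{i,j,l,m,n}.
\end{align*}
Using equation \eqref{2.1a} together with Taylor expansions in $t$ and in each $x_r$, the differential and time-difference pieces contribute a remainder bounded by a constant times $\Delta t + \Delta x_1 + \Delta x_2 + \Delta x_3$, which reduces to $O(\Delta t)$ since the mesh ratios $\Delta t/\Delta x_r$ are kept fixed by hypothesis~(i).

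The quadrature contribution to $\tau^k_{i,j,l,m,n}$ is where all the real work is. I would apply exactly the two-step reduction that already appears in the proof of Theorem \ref{thm:maximum}: to the function $g(\theta,\phi)=p(x_{ijl};\xi_{mn},\xi'(\theta,\phi))\,I(t_k,x_{ijl},\xi'(\theta,\phi))$, first write the outer $\phi$-quadrature error via Lemma \ref{lem:trapezoid} plus the mean value theorem for the $\sin\theta$-weighted integral, and then apply the standard composite trapezoidal rule error estimate for the $\theta$-integration. Because $I$ is $C^2$ with bounded derivatives (hypothesis (iii)) and $\mua, \mus$ are bounded (hypothesis (ii)), and because $\tilde p$ is $C^2$ by the global assumption, all second derivatives of $g$ are uniformly bounded, and the same manipulations that led to \eqref{aa}--\eqref{bb} yield
\[
c^+\sup_{k,i,j,l,m,n}\abs{\tau^k_{i,j,l,m,n}}\leq C'(\Delta t + \Delta\theta^2 + \Delta\phi^2)
\]
with $C'$ independent of the discretization parameters.

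Finally, I would set $E^k_{i,j,l,m,n}:=I^k_{i,j,l,m,n}-I(t_k,x_{ijl},\xi_{mn})$ and subtract the truncation-error identity from \eqref{3d-scheme}. Then $E$ satisfies the same scheme with source $-\tau^k_{i,j,l,m,n}$ in place of $q^k_{i,j,l,m,n}$ and with vanishing initial and inflow-boundary data. Applying the stability estimate of Theorem \ref{thm:maximum} to $E$ yields
\[
\norm{E^k}_\infty \leq c^+ T\sup_{k,i,j,l,m,n}\abs{\tau^k_{i,j,l,m,n}}\leq TC'(\Delta t+\Delta\theta^2+\Delta\phi^2),
\]
which is the claim with $C=TC'$. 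The only step that is not routine bookkeeping is the $(\theta,\phi)$-quadrature analysis for the scattering integral, but since the required identities are already established inside the proof of Theorem \ref{thm:maximum}, this main obstacle has effectively been dealt with in advance.
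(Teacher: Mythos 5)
Your proposal is correct and follows exactly the route the paper intends: the paper omits this proof with the remark that it is ``completely similar to that of Theorem \ref{thm:3.3},'' and your argument carries that out, combining the 2D truncation-error/discretization-error decomposition with the $(\theta,\phi)$-quadrature estimates already established in the proof of Theorem \ref{thm:maximum}. No discrepancies to report.
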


The proof is completely similar to that of Theorem \ref{thm:3.3} and is omitted.

\paragraph{Acknowledgement}

The authors wish to thank Professor Yuusuke Iso (Kyoto University) 
for advice and valuable comments. 
This study is partially supported by Grant-in-Aid for Challenging Exploratory Research 
No.~23654034. 
The second author's research is supported by Grant-in-Aid for Young Scientists (B) 
No.~23740075.

%%% References %%%

\end{document}